\newtheorem{theorem}{Theorem}
\newtheorem{lemma}[theorem]{Lemma}
\newtheorem{corollary}[theorem]{Corollary}
\newtheorem{proposition}[theorem]{Proposition}
\theoremstyle{definition}
\newtheorem{definition}[theorem]{Definition}
\newtheorem{example}[theorem]{Example}
\newtheorem{question}[theorem]{Question}
\newtheorem{remark}[theorem]{Remark}
\numberwithin{theorem}{section}
\numberwithin{equation}{section}
\newcommand{\EQ}[1]{(\ref{#1})}
\newcommand{\SEC}[1]{Section \ref{#1}}
\newcommand{\THM}[1]{Theorem \ref{#1}}
\newcommand{\LEM}[1]{Lemma \ref{#1}}
\newcommand{\PROP}[1]{Proposition \ref{#1}}
\newcommand{\R}{{\mathbb{R}}}
\newcommand{\C}{{\mathbb{C}}}
\newcommand{\ep}{\varepsilon}
\newcommand{\Lip}{\operatorname{Lip}}
\newcommand{\trace}{\operatorname{trace}}
\newcommand{\mrbox}[1]{ \quad \mbox{#1} \ }
\newcommand{\mmbox}[1]{ \quad \mbox{#1} \quad }
\newcommand{\Lg}{\text{S}}
\newcommand{\Le}{\hat{\text{S}}}
\newcommand{\Lu}{\text{L}}
\renewcommand{\div}{\operatorname{div}}
\begin{document}

\title{Vector-valued optimal Lipschitz extensions}

\author{Scott Sheffield}
\address{Massachusetts Institute of Technology}
\email{sheffield@math.mit.edu}

\author{Charles K. Smart}
\address{University of California Berkeley}
\email{smart@math.berkeley.edu}

\date{\today}

\begin {abstract}
Consider a bounded open set $U \subset \R^n$ and a Lipschitz function $g: \partial U \to \R^m$.  Does this function always have a canonical optimal Lipschitz extension to all of $U$?  We propose a notion of optimal Lipschitz extension and address existence and uniqueness in some special cases. In the case $n=m=2$, we show that smooth solutions have two phases: in one they are conformal and in the other they are variants of infinity harmonic functions called \emph{infinity harmonic fans}.  We also prove existence and uniqueness for the extension problem on finite graphs.
\end {abstract}

\maketitle

%%%%%

\section{Introduction}

\subsection{Overview}

Suppose $U \subseteq \R^n$ is bounded and open and $g : \partial U \to \R^m$ is Lipschitz, i.e.,
$\Lip(g,\partial U)<\infty,$ where
$$\Lip(f,X):= \sup_{x,y \in X} \frac{|f(x)-f(y)|}{|x-y|}.$$
A classical theorem of Kirszbraun implies that $g$ has an extension $u : \bar U \to \R^m$ such that $\Lip(u,\bar U) = \Lip(g,\partial U)$.  In general, there may be infinitely many such extensions \cite{kirszbraun}.

When $m = 1$, a theorem of Jensen \cite{MR1218686} implies that there is a unique extension $u$ of $g$ that is \emph{absolutely minimizing Lipschitz (AML)}, i.e., a unique extension $u$ satisfying
\begin{equation}
\label{defAML}
\Lip(u,V) = \Lip(u, \partial V) \mrbox{for every open} V \subseteq U.
\end{equation}
%This extension can also be characterized as the unique viscosity solution of the boundary value problem
%\begin{equation}
%\label{inflap}
%\left\{ \begin{array}{ll}
%- \Delta_\infty u = 0 & \mrbox{in} U, \\
%u = g & \mrbox{on} \partial U,
%\end{array} \right.
%\end{equation}
%where $\Delta_\infty u := |Du|^{-2} u_{x_i} u_{x_j} u_{x_i x_j}$, and is also called the \emph{infinity harmonic} extension of $g$.
%This alternate name owes its origin to the fact that \EQ{inflap} is the limiting Euler-Lagrange equation as $p \to \infty$ for minimizers of the functional
%\begin{equation}
%\label{penergy}
%I_p[u] := \int_U |Du|^p \,dx.
%\end{equation}
%Such minimizers are called \emph{$p$-harmonic} and are harmonic when $p=2$.  Moreover, for each $p$ the extension of $g$ minimizing \EQ{penergy} is unique, and as $p \to \infty$ these minimizers converge pointwise to the infinity harmonic extension.   Infinity harmonic functions are well studied objects in PDE \cite{MR2083637}, and are closely related to a stochastic game called \emph{tug of war} \cite{MR2449057}.
The situation is more complicated when $m > 1$, as the criterion \EQ{defAML} fails to characterize a unique extension. Indeed, there are functions satisfying \EQ{defAML} whose Lipschitz derivatives can be decreased everywhere (see Section \ref{secTight}). The purpose of this article is to identify and study an appropriate notion of optimal Lipschitz extension when $m > 1$.

We remark that the general problem of finding Lipschitz extensions of a Lipschitz function from a subset $Y$ of a metric space $X$ to another metric space $Z$ has been thoroughly studied, with diverse applications in mathematics, computer science, and engineering (see the books~\cite{MR0461107,BL00}, as well as the introductions of~\cite{MR2129708,MR2239346,naorsheffield}, and the references therein).  Much of this work has focused on finding extensions whose Lipschitz norm is either equal to or within some constant factor of the minimal one.  The idea of imposing conditions that would identify a single {\em uniquely} optimal extension has not received as much attention except in the following cases:
 \begin{enumerate}
 \item $Z = \R$.
 \item $Z$ is a metric tree.
 \item $X = \R$.
 \end{enumerate}
The first case is the subject of an extensive literature (see Section \ref{s.scalarhistory}).  In particular, when $Z=\R$ and $X$ is any length space, it is known that every bounded Lipschitz function on a closed subset of $X$ admits a unique AML extension to $X$ \cite{MR2449057}. In the second case, a more recent paper shows existence and uniqueness of AML extensions whenever $X$ is a locally compact length space \cite{naorsheffield}. In the third case, AML extensions are geodesic paths, which exist by definition if $Z$ is a geodesic metric space.  It is natural to seek optimal Lipschitz extensions for functions between more general pairs of metric spaces.  We limit our attention in this paper to the case that $Z = \R^m$ and either $X \subseteq \R^n$ is the closure of a bounded domain (and $Y$ is the boundary) or $X$ is a finite graph.  (The case that $X \subseteq \R^n$ and $Z = \R^m$ is particularly natural in light of Kirszbraun's theorem, as stated above.)

This paper introduces a notion of optimality called {\em tightness} that is stronger than the AML property \eqref{defAML} and yields existence and uniqueness results in the discrete setting, when $X$ is a finite graph. We extend the definition of tight extension to the continuum setting and then partially characterize the smooth tight functions using PDE.   In the case $n=m=2$, we prove that the smooth tight functions come in two ``phases''. In one phase they are conformal and in the other they are variants of infinity harmonic functions called \emph{infinity harmonic fans}.  We also prove some existence and uniqueness results within a class of radially symmetric tight functions.  Although AML extensions are known to be $C^1$ when $n=2$ and $m=1$ \cite{MR2185662}, our examples show that this is not necessarily true of tight extensions (recall that tight implies AML) when $n=m=2$.

%In light of Kirszbraum's theorem, one might expect that when $X \subseteq \R^n$ the criterion \EQ{defAML} would characterize a unique extension when $m > 1$ (as it does when $m=1$); it is at first glance surprising that this is not the case.

%In this paper, we use the matrix norm
%\begin{equation}
%\label{defNorm}
%|A| := \max_{|a| = 1} |A a|,
%\end{equation}
%instead of the more usual $\trace(A^t A)^{1/2}$. We choose this norm so that
%\begin{equation*}
%\Lip(u, V) = \sup_V |Du|,
%\end{equation*}
%whenever $u \in C^1(V, \R^m)$ and $V \subseteq \R^n$ is open. If one uses the usual norm in \eqref{penergy}, then one obtains a notion of $p$-harmonic and infinity harmonic (see \cite{MR2539666, ou-2008}) that is significantly different from what we consider here when $m > 1$.

\subsection{Tight functions on finite graphs}

We begin by studying the discrete version of our problem. Let $G = (X, E, Y)$ denote a connected graph with vertex set $X$, edge set $E$, and a distinguished non-empty set of vertices $Y \subseteq X$. We wish to understand when a function $u : X \to \R^m$ is the optimal Lipschitz extension of its restriction $u |_Y$ to $Y$.

The \emph{local Lipschitz constant} of a function $u : X \to \R^m$ at a vertex $x \in X \setminus Y$ is given by
\begin{equation*}
\Lg u(x) := \sup_{y \sim_E x} |u(y) - u(x)|.
\end{equation*}
A function $u : X \to \R^m$ is {\em discrete infinity harmonic} at $x \in X \setminus Y$ if there is no way to decrease $\Lg u(x)$ by changing the value of $u$ at $x$. It was shown in \cite{MR2449057} that if $G$ is finite then every $g : Y \to \R$ has a unique extension $u : X \to \R$ that is discrete infinity harmonic on $X \setminus Y$.

We show that uniqueness of discrete infinity harmonic extensions fails in the vector-valued case. To obtain uniqueness, we adopt a stronger optimality criterion.

\begin{definition}
If the functions  $u, v : X \to \R^m$ agree on $Y$ and satisfy
\begin{equation}
\label{defTightG}
\sup \{ \Lg u : \Lg u > \Lg v \} > \sup \{ \Lg v : \Lg u < \Lg v \},
\end{equation}
then we say that $v$ is \emph{tighter} than $u$ on $G$. We say that $u$ is \emph{tight} on $G$ if there is no tighter $v$. Informally, $v$ is tighter if it improves the local Lipschitz constant where it is large without making it too much worse elsewhere.
\end{definition}

We prove existence and uniqueness of tight extensions for finite graphs. The uniqueness of tight extensions fails for some infinite graphs; see \PROP{nonunique}.

%The above relation is transitive: that is, $w$ tighter than $v$ and $v$ tighter than $w$ together imply $w$ tighter than $u$.  To see this, consider two cases: if $M=\max \{ \Lg u : \Lg u > \Lg v \} \geq \max \{ \Lg v : \Lg v > \Lg w \}$ then $\Lg u \geq \Lg v \geq \Lg w$ wherever $\max \{\Lg u, \Lg v, \Lg w \} \geq M$, with $\Lg u > \Lg w$ on at least one point where $\Lg u = M$, which implies $w$ tighter than $u$.  If $\max \{ \Lg u : \Lg u > \Lg v \} < \max \{ \Lg v : \Lg v > \Lg w \} = M$, then similarly we have $\Lg u = \Lg v \geq \Lg w$ wherever $\max \{\Lg u, \Lg v, \Lg w \} \geq M$, with $\Lg u > \Lg w$ on at least one point where $\Lg u = \Lg v = M$, which implies $w$ tighter than $u$.  We prove existence and uniqueness of tight extensions.

\begin{theorem}
\label{EUG}
Suppose $G = (X, E, Y)$ denotes a finite connected graph with vertex set $X$, edge set $E$, and a distinguished non-empty set of vertices $Y \subseteq X$. Every function $g : Y \to \R^m$ has a unique extension $u : X \to \R^m$ that is tight on $G$.  Moreover, this $u$ is tighter than every other extension $v : X \to \R^m$ of $g$.
\end{theorem}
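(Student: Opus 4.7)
The plan is to identify tight extensions as the minimizers of the sorted Lipschitz sequence under a refined lex-type ordering, prove existence by iterated compactness minimization, and establish uniqueness via a convex-combination argument using the strict convexity of the Euclidean norm together with the connectedness of $G$.

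For existence, I parametrize extensions of $g$ by their values at interior vertices, identifying them with $\R^{m|X \setminus Y|}$. The set $\{v : \max_x \Lg v(x) \leq M\}$ is closed and bounded, hence compact, and non-empty for $M$ large enough by Kirszbraun's theorem. Let $L_1 := \min_v \max_x \Lg v(x)$, attained on a non-empty compact convex set $K_1$; I iteratively minimize on $K_{k-1}$ the $k$-th largest entry $s_k(v)$ of the multiset $\{\Lg v(x) : x \in X \setminus Y\}$, a continuous function of $v$, to obtain non-empty compact $K_k$. Writing $T_t(v) := \{x : \Lg v(x) \geq t\}$, the averaging relation $T_{L_1}(\tfrac{1}{2}(v_1+v_2)) \subseteq T_{L_1}(v_1) \cap T_{L_1}(v_2)$ shows that at each level a canonical minimal-level-set representative exists, producing a candidate tight $u$. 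Tightness follows from the equivalent reformulation: $v$ is tighter than $u$ iff there is a threshold $t_0$ with $T_s(v) \subseteq T_s(u)$ for all $s \geq t_0$ and strict inclusion at $s = t_0$, which rules out any $v$ being tighter than the $u$ produced.

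The crux is uniqueness, or equivalently the stronger conclusion that $u$ is tighter than every other extension. Suppose $v \neq u$ and $u$ is not tighter than $v$, and let $w := \tfrac{1}{2}(u+v)$. Convexity yields $\Lg w(x) \leq \tfrac{1}{2}(\Lg u(x) + \Lg v(x))$, and strict convexity of the Euclidean norm forces equality at $x$ to come from a neighbor $y \sim x$ realizing the local Lipschitz of both $u$ and $v$ at $x$ with equal edge-difference vectors $u(y) - u(x) = v(y) - v(x)$, so $D := u - v$ is constant along such an edge. Starting at a vertex $x_0 \in X \setminus Y$ maximizing $|D|$ (non-zero since $D|_Y = 0$ but $D \not\equiv 0$) and iterating along max-Lipschitz equality edges, connectedness produces a cycle $z_1, z_2, \ldots, z_k = z_1$ in $X \setminus Y$ on which $D$ is a fixed non-zero vector. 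A small perturbation of $u$ supported on this cycle, with increments $\phi(z_{i+1}) - \phi(z_i)$ chosen anti-parallel to the cycle edge-differences $a_i := u(z_{i+1}) - u(z_i)$, strictly decreases $|a_i|$ at each cycle edge to leading order, contradicting the tightness of $u$.

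The main obstacle is this cycle-perturbation step: one must verify that the perturbation, while reducing the cycle Lipschitz constants, does not raise any off-cycle Lipschitz value beyond its pre-perturbation maximum. This is handled by restricting to a minimal cycle within the ``equality subgraph'' of max-Lipschitz $D$-constant edges and choosing the perturbation size small enough that off-cycle edges, which have strictly smaller Lipschitz values, remain below the relevant threshold. Once uniqueness is established, the ``tighter than every other extension'' conclusion follows from the level-set inclusion characterization described above.
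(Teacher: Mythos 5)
Your existence argument is essentially the paper's: iteratively minimizing the sorted vector of local Lipschitz constants (lexicographic minimization over a compact set of extensions) and noting that a tighter competitor would be lexicographically smaller. That part is fine. The uniqueness half, however, has genuine gaps, and it is exactly the half where the real work lies. First, you never establish that any ``equality edge'' exists, let alone one incident to your chosen starting vertex. The equality $\Lg w(x) = \tfrac12(\Lg u(x)+\Lg v(x))$ for $w=\tfrac12(u+v)$ is not automatic; in the paper it is forced by the tightness of $u$ (a strict inequality would make $w$ tighter), and only on the top level set $\{\Lg u \ge K\}$, where $K:=\max\{\Lg u:\Lg u\neq\Lg v\}=\max\{\Lg v:\Lg u\neq\Lg v\}$ --- and even that equality of maxima uses that \emph{neither} $u$ nor $v$ is tighter than the other. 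Your starting point, a maximizer of $|D|=|u-v|$, is governed by no such principle: in the vector-valued setting there is no maximum principle for $|u-v|$ (this is precisely what \eqref{linearcmp} failing, via the amplifier gadget, demonstrates), and the maximizer of $|D|$ may sit where $\Lg u$ and $\Lg v$ are small, so nothing forces equality there. Moreover, equality in the triangle inequality \eqref{edgeconvex} only makes $u(y)-u(x)$ and $v(y)-v(x)$ positively parallel; they are equal (so that $D$ is constant along the edge) only if in addition $\Lg u(x)=\Lg v(x)$, which again requires the threshold structure you never introduce.

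Second, the cycle construction and the off-cycle control do not go through. Even granting equality edges, your iteration can dead-end at a vertex whose only equality edge is the one you arrived on, so no cycle need exist: the relevant object is a connected component of ``long'' edges, which may be a tree. More seriously, the obstacle you yourself flag is real and your fix does not address it: a vertex on your cycle can have \emph{another} incident edge of the same top length $K$ (realizing $\Lg u$ there) which your cycle-supported perturbation lengthens; then the perturbed map need not be tighter than $u$, and choosing a ``minimal cycle'' says nothing about such off-cycle edges, whose lengths are not strictly smaller as you assert. The paper's device is structurally different: it perturbs the midpoint $w$ (not $u$), sets $\tilde E:=\{(x,y): |w(x)-w(y)|\ge K\}$, shows $u\equiv v$ on vertices $\tilde E$-connected to $Y$, and then uniformly contracts an entire maximal $\tilde E$-connected component $X_0\subseteq\{\Lg w\ge K\}$ (disjoint from $Y$) by a factor $1-\ep$; maximality guarantees there are no outgoing $\tilde E$-edges, so \emph{every} edge of length $\ge K$ at those vertices shrinks, while the strictly shorter outgoing edges stay below $K$ for small $\ep$, producing a competitor tighter than $u$. (Incidentally, your closing remark is backwards: ``tighter than every other extension'' does not follow formally from uniqueness, since tighter-than is not a total order; the paper proves the stronger statement directly, and your own contradiction setup, if repaired along the lines above, would too.) Without the threshold $K$ and without contracting a whole component rather than a cycle of $u$-edges, the argument does not close.
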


When the graph is finite, we prove that the unique tight extension is the limit of the discrete $p$-harmonic extensions.

\begin{theorem}
\label{PlimG}
In addition to the hypotheses of \THM{EUG}, suppose that for each $p > 0$, the function $u_p : X \to \R^m$ minimizes
\begin{equation}
\label{penergyG}
I_p[u] := \sum_{x \in X \setminus Y} (\Lg u(x))^p,
\end{equation}
subject to $u_p |_Y = g$. As $p \to \infty$, the $u_p$ converge to the unique tight extension of $g$.
\end{theorem}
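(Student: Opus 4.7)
My plan is to show via compactness that every subsequential limit of $(u_p)$ is tight on $G$, and then appeal to the uniqueness in \THM{EUG}. For compactness, I compare $u_p$ against the unique tight extension $u^*$: the inequality $I_p[u_p] \le I_p[u^*]$ and a $p$-th root give $\max_x \Lg u_p(x) \le |X \setminus Y|^{1/p} \max_x \Lg u^*(x)$, which is uniformly bounded in $p$. Combined with $u_p|_Y = g$ and connectedness of $G$, this bounds $\|u_p\|_\infty$ uniformly; since $X$ is finite, a subsequence $u_{p_k}$ converges pointwise to some $w : X \to \R^m$ with $w|_Y = g$.

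Next, I would show that any such $w$ is tight on $G$. Suppose instead that some extension $v$ of $g$ is tighter than $w$, so that $A := \sup\{\Lg w(x) : \Lg w(x) > \Lg v(x)\}$ and $B := \sup\{\Lg v(x) : \Lg v(x) > \Lg w(x)\}$ satisfy $A > B$. The key structural consequence is that for every threshold $t \in (B, A]$, any vertex $x$ with $\Lg v(x) \ge t$ also has $\Lg w(x) \ge \Lg v(x) \ge t$; moreover, at $t = A$, any $x_0$ with $\Lg w(x_0) = A > \Lg v(x_0)$ witnesses a strict containment. Thus the sorted-decreasing sequence of the values $\{\Lg v(x)\}_x$ sits lexicographically strictly below that of $\{\Lg w(x)\}_x$ at the first coordinate where they differ. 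As $\sum_x \phi(x)^p$ for large $p$ is asymptotically dominated by the largest values of $\phi$ in a lexicographic manner, this should translate into an eventual strict inequality $I_p[v] < I_p[w]$; combined with $I_{p_k}[u_{p_k}] \le I_{p_k}[v]$ and the pointwise convergence $\Lg u_{p_k}(x) \to \Lg w(x)$, this is the source of the sought contradiction.

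The main obstacle is the delicate control of convergence rates: although $\Lg u_{p_k} \to \Lg w$ pointwise (uniformly, since $X$ is finite), the $p_k$-th powers in $I_{p_k}$ amplify small pointwise discrepancies into constant factors, so one cannot directly compare $I_{p_k}[u_{p_k}]$ to $I_{p_k}[w]$. To handle this, I would argue by induction on the distinct levels of the sorted-decreasing values of $\Lg u^*$. At the top level $L_1 := \max \Lg u^*$, with $k_1^* := |\{x : \Lg u^*(x) = L_1\}|$, the $\ell^p$ comparison $\|\Lg u_p\|_{\ell^p} \le \|\Lg u^*\|_{\ell^p}$ yields the a priori rate $\max \Lg u_p \le L_1(1 + (\log k_1^*)/p + o(1/p))$, which combined with Jensen's inequality applied to $t \mapsto t^{p}$ and the values $\Lg u_p(x)$ at $x \in \{\Lg w = L_1\}$ should force $|\{\Lg w = L_1\}| = k_1^*$. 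Iterating this analysis at each next distinct level of $\Lg u^*$ shows that the sorted-decreasing sequence of $\Lg w$ matches that of $\Lg u^*$; uniqueness in \THM{EUG} then identifies $w$ with $u^*$, giving convergence of the full sequence.
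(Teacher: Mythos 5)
Your compactness step is fine and agrees with the paper, and you have correctly diagnosed the obstruction: since $\Lg u_{p_k}\to \Lg w$ only pointwise with no rate, one cannot pass from $I_p[v]<I_p[w]$ to $I_p[v]<I_p[u_{p_k}]$, so the ``easy fact'' that tighter functions have smaller $p$-energy for large $p$ does not finish the argument. (The paper explicitly flags this exact subtlety.) Unfortunately, your proposed repair does not close the gap. The inequality $I_p[u_p]\le I_p[u^*]$ and Jensen's inequality only give an \emph{upper} bound of the shape $\max \Lg u_p \le L_1(1+\log k_1^*/p + o(1/p))$; nothing forces a matching \emph{lower} bound $\Lg u_p(x)\ge L_1(1-o(1/p))$ on $\{\Lg w = L_1\}$. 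For instance, $\Lg u_p(x)=L_1 - p^{-1/2}$ is consistent with pointwise convergence, yet makes $\Lg u_p(x)^p = o(L_1^p)$, so a surplus vertex with $\Lg w=L_1$ produces no contradiction with $I_p[u_p]\le I_p[u^*]\sim k_1^*L_1^p$. The same problem is fatal at the inductive step: after ``removing'' the top level, the residual $\sum_x \Lg u_p(x)^p - \sum_{\{\Lg w=L_1\}}\Lg u_p(x)^p$ is not controlled at scale $L_2^p$, because the subtracted sum could be anywhere between $0$ and $\sim k_1^*L_1^p$.

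The paper circumvents the rate problem with a different device that your proposal is missing: instead of comparing $I_{p_k}[u_{p_k}]$ against a \emph{fixed} competitor, it builds a $k$-dependent competitor $v_k$ by patching, namely $v_k:=u_{p_k}$ on the set $\{\Lg u>K\}$ (where, by a separate argument using uniqueness of tight extensions on the reduced graph, the subsequential limit $u$ already agrees with the tight extension $v$) and $v_k:=v$ elsewhere. This makes the ``high'' terms of $I_{p_k}[u_{p_k}]$ and $I_{p_k}[v_k]$ cancel exactly, so no rate information is needed there; on the remaining vertices $\Lg v_k$ is eventually bounded strictly below $K$, so those terms are exponentially smaller than the guaranteed term of size $\ge (K-\delta/4)^{p_k}$ coming from $\{\Lg u=K\}$. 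That exact cancellation on the top stratum is the essential idea you would need to add.
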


Finally, we provide an equivalent definition of tight by replacing the supremum over vertices in \eqref{defTightG} with supremum over edges; see \PROP{edgeTight} in \SEC{secGraph}.

%, and replacing $\Lg u$ with $\Le u$, where the latter is the gradient-norm function on edges defined by $\Le u((x,y)) := |u(x) - u(y)|$.  (See \PROP{edgeTight} in \SEC{secGraph}.)  Indeed, both of the theorems above could be formulated using $\Le u$ instead of $\Lg u$.  We use the latter here because it is more analogous to the continuum Lipschitz derivative $\Lu u$ that we define in the next section: both $\Lg u(x)$ and $\Lu u(x)$ are defined as suprema of slopes involving points close to $x$, while $\Le u$ is defined without additional quantifiers.

\subsection{Tight functions on $\R^n$}

Moving to the continuum case, we suppose $U \subseteq \R^n$ is bounded and open. Recall from \cite{MR1861094} that the AML extension of a scalar function $g \in C(\partial U)$ can also be characterized as the unique viscosity solution of the boundary value problem
\begin{equation}
\label{inflap}
\left\{ \begin{array}{ll}
- \Delta_\infty u = 0 & \mrbox{in} U, \\
u = g & \mrbox{on} \partial U,
\end{array} \right.
\end{equation}
where $\Delta_\infty u := |Du|^{-2} u_{x_i} u_{x_j} u_{x_i x_j}$ is the {\em infinity Laplacian}. Viscosity solutions of \eqref{inflap} are called {\em infinity harmonic}. This alternate name owes its origin to the fact that \EQ{inflap} is the limiting Euler-Lagrange equation as $p \to \infty$ for minimizers of the functional
\begin{equation}
\label{penergy}
I_p[u] := \int_U |Du|^p \,dx.
\end{equation}
Such minimizers are called \emph{$p$-harmonic} and are harmonic when $p=2$.  Moreover, for each $p > 1$ the extension of $g$ minimizing \EQ{penergy} is unique, and as $p \to \infty$ these minimizers converge pointwise to the infinity harmonic extension.
%Infinity harmonic functions are well studied objects in PDE \cite{MR2083637}, and are closely related to a stochastic game called \emph{tug of war} \cite{MR2449057}.

In the vector-valued case, we expect the limit as $p \to \infty$ of minimizers of \eqref{penergy} to be an optimal Lipschitz extension. However, in order to obtain something like an AML extension in the limit as $p \to \infty$, we must be careful to use the matrix norm
\begin{equation}
\label{defNorm}
|A| := \max_{|a| = 1} |A a|,
\end{equation}
so that
\begin{equation*}
\Lip(u, V) = \sup_V |Du|,
\end{equation*}
whenever $u \in C^1(U, \R^m)$ and $V \subseteq U$ is open. Indeed, if one uses the more usual Frobenius norm
\begin{equation*}
|A|_F = \trace(A^t A)^{1/2},
\end{equation*}
in \eqref{penergy}, then one obtains a notion of $p$-harmonic and infinity harmonic \cite{MR2539666, ou-2008} that is not related to AML extensions when $m > 1$. Because the norm \eqref{defNorm} is neither smooth nor strictly convex, trying to compute an optimal Lipschitz extension as the limit of $p$-harmonic extensions seems difficult in the vector-valued case.

Instead, we obtain a notion of optimal Lipschitz extension in the continuum case by taking the strongest natural analogue of \EQ{defTightG}. Consider a bounded open set $U \subseteq \R^n$. If $u \in C(U, \R^m)$ is Lipschitz, then the \emph{local Lipschitz constant} of $u$ at a point $x \in U$ is
\begin{equation*}
\Lu u(x) := \inf_{r > 0} \Lip(u, U \cap B_r(x)).
\end{equation*}

\begin{definition}
If the Lipschitz functions $u, v \in C(\bar U, \R^m)$ agree on $\partial U$ and satisfy
\begin{equation*}
\sup \{ \Lu u : \Lu v < \Lu u \} > \sup \{ \Lu v : \Lu v > \Lu u \},
\end{equation*}
then we say that $v$ is \emph{tighter} than $u$ on $U$. We say that $u$ is \emph{tight} if there is no tighter $v$.
\end{definition}

%The same argument as in the discrete case shows that the relation is transitive.
Our first result for tight functions on $\R^n$ is the following theorem. A \emph{principal direction field} for a function $u \in C^1(U, \R^m)$ is a unit vector field $a \in C(U, \R^n)$ such that $a(x)$ spans the principal eigenspace of $Du(x)^t Du(x)$ for all $x \in U$ (in particular, the principal eigenvalue of $Du(x)^t Du(x)$ is simple). Note that the linearization of $u$ at $x$ sends a small sphere centered at $x$ to a small ellipsoid centered at $u(x)$. If $a$ is a principal direction field, then $a(x)$ is parallel to the line whose image in $u$ is the (unique) major axis of that ellipsoid. Informally, $\pm a(x)$ are the directions in which $u$ is most rapidly changing.

\begin{theorem}
\label{Pr}
Suppose $U \subseteq \R^n$ is bounded open and $u \in C^3(U, \R^m)$ has principal direction field $a \in C^2(U, \R^n)$. Then $u$ is tight if and only if
\begin{equation}
\label{pdePr}
-(u_a)_a = 0 \mrbox{in} U,
\end{equation}
where we define $v_a := v_{x_i} a^i$ for any $v \in C^1(U, \R^k)$.
\end{theorem}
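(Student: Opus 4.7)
The proof splits into two directions; in both I use the identity $\Lu u(x) = |Du(x)| = |u_a(x)|$, which holds for $u \in C^1$ with simple principal eigenvalue of $Du^t Du$.

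\emph{Easy direction (\eqref{pdePr} $\Rightarrow$ tight).} The hypothesis makes $u_a(\gamma(t))$ constant along each arc-length integral curve $\gamma$ of $a$, so $u \circ \gamma$ is affine in $t$ with constant speed $\lambda_0 := |u_a|$. Boundedness of $u$ on $\bar U$ forces every maximal integral curve to exit $\partial U$ at endpoints $\gamma(T_\pm)$. Suppose, for contradiction, that a tighter $v$ exists. With $\mu := \sup\{\Lu v : \Lu v > \Lu u\}$, pick $x_0$ satisfying $\Lu v(x_0) < \Lu u(x_0) =: \lambda_0 > \mu$. Since $v = u$ on $\partial U$, the fundamental theorem of calculus yields
\begin{equation*}
\int_{T_-}^{T_+}(v \circ \gamma)'(t)\,dt = u(\gamma(T_+)) - u(\gamma(T_-)) = (T_+ - T_-)\,u_a.
\end{equation*}
If $|(v \circ \gamma)'(t^*)| > \lambda_0$ for some $t^*$, then $\Lu v(\gamma(t^*)) \geq |Dv(\gamma(t^*))\gamma'(t^*)| > \lambda_0 > \mu$ while $\gamma(t^*) \in \{\Lu v > \Lu u\}$, contradicting the definition of $\mu$. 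Otherwise, the chain $|\int| \leq \int|\cdot| \leq (T_+ - T_-)\lambda_0$ is a string of equalities, forcing $(v \circ \gamma)' \equiv u_a$ a.e.; then $v \equiv u$ along $\gamma$, and the local Lipschitz constant of $v$ at $x_0$, bounded below by that of $u|_\gamma$ at $x_0$ (which equals $\lambda_0$ since $u \circ \gamma$ has uniform speed $\lambda_0$), contradicts $\Lu v(x_0) < \lambda_0$.

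\emph{Hard direction (tight $\Rightarrow$ \eqref{pdePr}), by contrapositive.} Assume $(u_a)_a(x_0) \neq 0$; I plan to build $v \in C(\bar U, \R^m)$ with $v = u$ on $\partial U$, $|Dv| \leq |Du|$ throughout $U$, and $|Dv| < |Du|$ somewhere, so that $\{\Lu v > \Lu u\} = \emptyset$ makes $v$ automatically tighter. The construction modifies $u$ only within a thin Fermi-coordinate tube $\Phi([-\delta, \delta] \times B^{n-1}_r)$ around the integral-curve segment $\gamma([-\delta, \delta])$, with $\Phi$ chosen so that $\partial_s|_{y=0} = a \circ \gamma$, and leaves $v = u$ outside the tube. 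On the segment itself I replace $u$ by the chord $q(t) = u(\gamma(-\delta)) + (t+\delta)\frac{u(\gamma(\delta)) - u(\gamma(-\delta))}{2\delta}$, and extend transversally via
\begin{equation*}
\tilde v(s, y) := \tilde u(s, y) - \eta(y)\bigl[\tilde u(s, 0) - q(s)\bigr],
\end{equation*}
where $\tilde u = u \circ \Phi$ and $\eta$ is a smooth radial cutoff with $\eta(0) = 1$ and $\eta \equiv 0$ outside $B^{n-1}_r$. The decisive scalar estimate
\begin{equation*}
|q'| = \frac{1}{2\delta}\Bigl|\int_{-\delta}^{\delta}u_a(\gamma(t))\,dt\Bigr| < \max_{|t|\leq\delta}|u_a(\gamma(t))|
\end{equation*}
is strict because $(u_a)_a(x_0) \neq 0$ prevents $u_a \circ \gamma$ from being constant on $[-\delta,\delta]$: at least one of the inequalities $|\int u_a| \leq \int|u_a|$ (strict if $u_a$ rotates in $\R^m$) or $\int |u_a| \leq 2\delta \max|u_a|$ (strict if $|u_a|$ varies along $\gamma$) becomes strict for small $\delta$.

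The main obstacle is verifying $|Dv| \leq |Du|$ throughout the tube. On the curve $y = 0$, $Dv$ differs from $Du$ only in its $a$-column (which becomes $q'$), and the mutual orthogonality $u_{a_i} \cdot u_{a_j} = a_i^t\,Du^t Du\,a_j = 0$ for the principal-basis eigenvectors $a_1 = a, a_2, \ldots, a_n$ of $Du^t Du$ lets me diagonalize and compare operator norms directly, showing the largest singular value drops by essentially $\max|u_a| - |q'|$. Off the curve, $y$-derivatives of the correction $\eta(y)[\tilde u(s,0) - q(s)]$ produce errors of size $O(\delta^2/r)$ in $Dv$, which I plan to dominate by choosing $r$ small enough relative to $\delta$ (for example $r = \delta^{3/2}$). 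The subtle case is $(u_a)_a \perp u_a$ at $x_0$, where the chord's strict drop occurs at the segment endpoints rather than at $x_0$ itself; I would handle this by recentring the tube at the nearby maximum of $|u_a|$ along $\gamma$, or by replacing the chord with a slightly different interpolant that concentrates the drop at $x_0$, since producing the drop anywhere in $U$ suffices to contradict tightness.
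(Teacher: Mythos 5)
Your ``$\Leftarrow$'' direction (\eqref{pdePr} implies tight) is essentially the paper's argument --- integrate along the principal flow curve through a point where $\Lu u$ exceeds both $\Lu v$ and $\sup\{\Lu v : \Lu v > \Lu u\}$, use that $u\circ\gamma$ is affine with constant speed, and contradict agreement on $\partial U$ --- and it is correct; your handling of the equality case is, if anything, more careful than the paper's.

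The contrapositive direction, however, has a genuine gap. Your headline strategy --- produce $v$ with $|Dv|\le |Du|$ everywhere and strict inequality somewhere, so that $\{\Lu v>\Lu u\}=\emptyset$ makes $v$ ``automatically tighter'' --- cannot work for the chord construction precisely in the main case $u_a\cdot(u_a)_a\neq 0$. There $|u_a(\gamma(t))|$ varies to first order in $t$, while $q'$ is the average of $u_a\circ\gamma$, so $|q'|=|u_a(x_0)|+O(\delta^2)$; at center-curve points near the ``low'' end of the segment, where $|u_a(\gamma(s))|\approx |u_a(x_0)|-c\delta$, the new $a$-column alone already has norm $|q'|>|Du(\gamma(s))|$, hence $\Lu v(\gamma(s))>\Lu u(\gamma(s))$. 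You are therefore forced back to comparing suprema as in the definition of tighter, which requires ensuring that every point where $\Lu v$ increases lies at a level strictly below the level where you achieve a decrease; this is exactly what the paper engineers by centring its perturbation $v=u+s\varphi u_a$ at a \emph{strict maximum} $x^*$ of $|u_a|$ over the support of the bump and using $\varphi_a/\varphi\to-\infty$ there, so the decrease happens at the top level while any increases are $O(s)$ at strictly lower levels. Your sketch does not secure this: the transverse error is $O(\delta^2/r)$, so the choice $r=\delta^{3/2}$ makes it $O(\delta^{1/2})$, swamping the $O(\delta)$ gain (you need $r$ large relative to $\delta$, not small), and even then you must exclude increases of $\Lu v$ at off-curve points where $\Lu u$ is already within $O(\delta)$ of its maximum over the tube. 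Finally, the case $u_a\cdot(u_a)_a=0$ but $(u_a)_a\neq 0$ (constant speed, curved image), which you acknowledge and defer to ``recentring'' or a modified interpolant, is a full case of the theorem: there the drop $\max|u_a|-|q'|$ is only $O(\delta^2)$, the same order as the direction-mismatch and cutoff errors, so the comparison is genuinely second order. The paper handles it separately with the perturbation $v=u+s\varphi b_a$, $b:=|u_a|^{-1}u_a$, using $b\cdot(b_a)_a=-|b_a|^2<0$ together with the expansion \eqref{dnorm}. As written, your ``tight $\Rightarrow$ \eqref{pdePr}'' argument is a plausible program but not a proof.
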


Note that in the case $m = 1$, we have $a = \pm |Du|^{-1} Du$ and thus $a \cdot a_a = 0$ and
\begin{equation*}
- (u_a)_a = - (u_{x_i} a^i)_{x_j} a^j = - u_{x_i x_j} a^i a^j  - u_{x_i} a^i_{x_j} a^j = - \Delta_\infty u.
\end{equation*}
Thus in the scalar case the system \eqref{pdePr} reduces to the PDE \eqref{inflap}. We show below that \eqref{pdePr} is the limiting Euler-Lagrange equation for minimizers of \eqref{penergy} as $p \to \infty$ in the principal case.

An immediate and important corollary of \THM{Pr} is that $C^3$ infinity harmonic functions from $\R^n$ to $\R$ are tight (recall that $C^2$ infinity harmonic functions have non-vanishing gradients \cite{MR2255237}). Equally important is the following geometric interpretation. We call the maximal curves in $U$ that are parallel to the principal direction field the \emph{principal flow curves}.  The system \EQ{pdePr} says that the image of a principal flow curve is a straight line. Moreover, it says that the map from the curve to the line has constant speed. When $n=2$, we can identify points on the image lines by the principal of orthogonal motion (see \SEC{secFans}) and obtain from $u$ an infinity harmonic function with a one-dimensional image.  In this case, $u$ may be interpreted as a ``fanned out'' version of a one-dimensional infinity harmonic function.

When the domain $U$ is a subset of $\R^2$ and the function $u$ is differentiable, the alternative to having a principal direction at a point is to be conformal (or anti-conformal) at that point. In this case, tightness is equivalent to a differential inequality.

\begin{theorem}
\label{Co}
Suppose $U \subset \C$ is bounded and open and $u: U \to \C$ is analytic in a neighborhood of $\overline U$. Then $u$ is tight if and only if either
\begin{enumerate}
\item $u'' = 0$ throughout $U$ (i.e., $u$ is affine on $U$).
\item The meromorphic function $\frac{u'u'''}{(u'')^2}$ satisfies \begin{equation}\label{e.uderivatives} \Re \frac{u'u'''}{(u'')^2} \leq 2\end{equation} wherever it is defined in $U$.  (In particular, this implies that any singularities of $\frac{u'u'''}{(u'')^2}$ are removable.)
\end{enumerate}
\end{theorem}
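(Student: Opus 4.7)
The strategy is to work directly with the conformal phase, since \THM{Pr} does not apply to analytic maps: for analytic $u$, at every $z$ with $u'(z) \neq 0$ the Jacobian $Du(z)$ is a scaled rotation, both singular values coincide, and no principal direction field is defined. I use throughout that $\Lu u(z) = |u'(z)|$, a subharmonic function which attains its supremum $M := \sup_{\partial U}|u'|$ on $\partial U$. Case~(1), $u'' \equiv 0$, is the affine subcase where $|u'|$ is constant; tightness follows from the uniqueness of Lipschitz extensions of affine boundary data with matching Lipschitz constant.

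For the forward direction of case~(2), I proceed by contradiction. Suppose $u$ is tight but $\alpha := \Re(u'u'''/(u'')^2)(z_0) > 2$ at some $z_0$ with $u'(z_0), u''(z_0) \neq 0$. Writing $A = u'(z_0)$, $B = u''(z_0)$, $C = u'''(z_0)$, and $\omega = A\bar B/|AB|$, a Taylor expansion gives
\begin{equation*}
|u'(z_0 + s\omega + it\omega)| = |A| + s|B| + \frac{|B|^2}{2|A|}\bigl(\alpha s^2 + (1-\alpha)t^2\bigr) + O((s^2+t^2)^{3/2}).
\end{equation*}
Since $1 - \alpha < -1$, $z_0$ is a strict local maximum of $|u'|$ along the line $\{z_0 + it\omega : t \in \R\}$. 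I would construct a compactly supported non-holomorphic perturbation $v = u + \epsilon\psi$ with support a thin pill around a short segment of this line, on which $|u'|$ stays well below $M$. Using the identity $|D(u+\epsilon\psi)|(z) = |u'(z) + \epsilon\psi_z(z)| + \epsilon|\psi_{\bar z}(z)|$ for the operator norm, I would choose $\psi_z$ nearly antiparallel to $u'$ so that $|Dv|$ decreases by $\Theta(\epsilon)$ on an open neighborhood of $z_0$; the minor increases outside the pill's core would be $O(\epsilon)$ and localized where $|u'|$ is already bounded away from $M$, making $v$ tighter than $u$.

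For the reverse direction, I assume $\Re(u'u'''/(u'')^2) \leq 2$ wherever defined and argue that no $v \in C(\bar U, \C)$ with $v|_{\partial U} = u|_{\partial U}$ is tighter than $u$. A variational route would interpret the condition as the Euler--Lagrange equation for the limit as $p \to \infty$ of $p$-energy minimizers in the conformal phase, complementing the principal-phase PDE~\EQ{pdePr}; then invoke a continuum analogue of \THM{PlimG}. Alternatively, a geometric route argues that the condition forces the super-level sets $\{|u'| \geq c\}$ to be sufficiently well-shaped (``wedge-convex'' perpendicular to the gradient) that they are connected to $\partial U$ by paths along which any competitor with a smaller local Lipschitz norm would fail to match the boundary data. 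Removability of singularities of $u'u'''/(u'')^2$ at the isolated zeros of $u''$ is automatic: the real part of a meromorphic function with an isolated pole is unbounded above, contradicting $\Re \leq 2$.

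The main obstacle is the perturbation construction in the forward direction. The competitor $v$ must strictly lower $\Lu$ on an open set while keeping $\Lu v \leq M$ globally and agreeing with $u$ on $\partial U$. The condition $\alpha > 2$ provides the decisive geometric ingredient: near $z_0$ the super-level set $\{|u'| \geq |A|\}$ is contained in a narrow wedge about $\omega$, leaving a fat transverse region in which a non-holomorphic $\psi$ can be inserted. Making this rigorous requires careful bookkeeping of the Wirtinger derivatives of $\psi$ and verifying that the unavoidable order-$\epsilon$ corrections coming from $\psi_{\bar z} \neq 0$ concentrate where $|u'|$ is already far below $M$.
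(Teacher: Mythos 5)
Your forward direction has a genuine gap, and it is exactly at the point where the threshold $2$ should enter. The geometric ingredient you isolate --- that at a point with $\alpha := \Re\bigl(u'u'''/(u'')^2\bigr) > 2$ the function $|u'|$ has a strict local maximum along the transverse line $\{z_0 + it\omega\}$, with the super-level set $\{|u'| \ge |A|\}$ pinched around the gradient direction --- already holds whenever $\alpha > 1$. But maps with $1 < \alpha \le 2$ are tight by the theorem: for instance $u(z) = 1/z$ on an annulus has $u'u'''/(u'')^2 \equiv 3/2$, its level curves of $|u'|$ are circles, and along the tangent line at any $z_0$ the modulus $|u'|$ has precisely the strict transverse maximum you describe. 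So if your pill perturbation worked under the stated hypotheses it would produce a tighter competitor for $1/z$, contradicting the theorem; the construction as sketched cannot be completed. The actual role of the constant $2$ is that the tightness-relevant convexity lives in the \emph{image}, not the domain: by \LEM{coequiv}, \EQ{e.uderivatives} is equivalent to $-\Delta_1 \log |(u^{-1})'| \ge 0$, i.e.\ local convexity of the sets $u(\{|u'| \le c\})$, and the conformal factor shifts the domain threshold $\alpha = 1$ (convexity of $\{|u'|\ge c\}$ in the domain) to $\alpha = 2$ in the target. Your bookkeeping is also aimed at the wrong comparison: tightness does not care about $M = \sup_{\partial U}|u'|$; it requires every point where $\Lu v$ increases to end up strictly below the supremum of $\Lu u$ over the set where you decrease. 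Inside a thin pill around the transverse segment, the maximum of $|u'|$ is not at $z_0$ but on the $+\omega$ side of the pill (where $|u'| \approx |A| + c\,\delta > |A|$), and a compactly supported $\psi$ inevitably has uncontrolled Wirtinger derivatives near the support boundary there, so the increases can sit above the level you lower at $z_0$. The paper avoids this by perturbing with $s\varphi u_a$, where $a$ is the direction of $D|Du|^2$ and the ball is chosen so that $|Du|^2$ has a \emph{strict maximum on the closed ball at a boundary point} $x^*$; the failure of \EQ{e.delta1} enters as $u_b\cdot(u_a)_b < 0$ (transverse concavity in the image), which is what makes the first-order term negative near $x^*$ --- this is the ingredient that genuinely distinguishes $\alpha > 2$ from $\alpha \in (1,2]$ and is absent from your argument.

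The reverse direction is also only a sketch, and the ``variational route'' is not available: there is no continuum analogue of \THM{PlimG} in the paper (continuum existence and uniqueness is posed as an open question), so you cannot identify tight functions with limits of $p$-energy minimizers. Your ``geometric route'' points in the direction of the paper's actual proof, but the content is missing: the paper takes a competitor $v$ tighter than $u$, looks at a connected component $V$ of $\{|u'| > s - \ep\}$ attached to the boundary level set $\{|u'| = s\}$, and uses the image-convexity \EQ{e.delta1} to produce a curve $\tilde\Gamma \subset V$ whose $u$-image is a straight tangent chord with both endpoints on $\partial U$; since along $\tilde\Gamma$ the map $u$ stretches at the maximal admissible rate, $v\circ (u|_{\tilde\Gamma})^{-1}$ is a contraction of a segment fixing its endpoints, hence the identity, forcing $v = u$ on $\tilde\Gamma$ and a contradiction. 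Without constructing such curves (and without the image-side convexity that guarantees the chord stays in the steep region and reaches $\partial U$), the assertion that competitors ``fail to match the boundary data'' is unsupported. The removability remark and the affine case (1) are fine.
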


When $u$ is one-to-one and $u''$ does not vanish, the criterion \EQ{e.uderivatives} can be more suggestively written as
\begin{equation}
\label{e.delta1} - \Delta_1 \log |v'| \geq 0,
\end{equation}
where $v = u^{-1}$ and $\Delta_1 := \Delta - \Delta_\infty$. Informally, the inequality \EQ{e.delta1} says that the boundary of the set $\{ |v'| \geq s \}$ is locally convex for all $s > 0$. See Lemma \ref{coequiv}.

We construct, in Section \ref{secEG}, examples of tight functions that have both principal and conformal pieces and smooth interfaces between them. Some of these examples are not differentiable.

\subsection{Alternative notions of tight}

\subsubsection{History of the scalar case} \label{s.scalarhistory}

Aronsson proved in 1967 that if $X$ is the closure of a bounded domain in $\R^n$, then a smooth function from $X$ to $\R$ is an AML extension of its values on $\partial X$ if and only if it is infinity harmonic.  At the time, the existence and uniqueness of AML extensions was unknown in general, and it was also unclear what the correct notion of a non-smooth infinity harmonic function should be.

Crandall and Lions remedied the latter problem by introducing the notion of viscosity solution in 1983 \cite{MR690039}.  The definition of viscosity solution was based on a monoticity requirement: roughly speaking, a function is a viscosity solution if whenever it is less (greater) than a smooth solution on the boundary of a domain, it is less (greater) than the smooth solution in the interior as well.  Using this definition, Jensen established the existence and uniqueness of infinity harmonic extensions in 1993 \cite{MR0217665, MR1218686}.

Restricting the class of smooth test functions to {\em cones}, one obtains an analogue of the viscosity solution property called {\em comparison with cones} that easily extends to other metric spaces. Jensen~\cite{MR1218686} proved that infinity harmonic functions satisfy comparison with cones and Crandall, Evans, and Gariepy \cite{MR1861094} proved that a function on a bounded open set $U \subseteq \R^n$ is AML if and only if it satisfies comparison with cones. Champion and De Pascale \cite{MR2341302} adapted this definition
to length spaces, where cones are replaced by functions of the form $\phi(x)=b\,d(x,z)+c$ where $b>0$.  The existence of AML extensions was extended to separable length space domains \cite{MR1884349}, and  \cite{MR2449057} established existence and uniqueness for general length spaces (using game-theoretic techniques).
%(A non-game-theoretic formulation appears in \cite{Armstrong-Smart}.)
%The equivalence of AML and comparison with distance functions property played a major role.

\subsubsection{Proper definitions for the vector case}

The success of the theory in the case $Z=\R$ relies heavily on monotonicity techniques.  These techniques do not seem to generalize to $Z = \R^m$ with $m > 1$.  As a result, we are unable to generalize all of the $Z = \R$ results to $Z=\R^m$.  This paper goes about as far with the $m>1$ theory as Aronsson went with the $m=1$ theory in the 1960's.  We introduce a notion of optimality (tightness) and describe the smooth functions with that property.  It remains to be seen whether our definition has to be modified in order to establish a satisfactory existence and uniqueness theory.

The following alternative notion of tight was suggested by Robert Jensen. If $U \subseteq \R^n$ is bounded open and $u, v : U \to \R^m$ are Lipschitz, then we say $u$ is {\em measure-tighter} than $v$ if $u = v$ on $\partial U$ and there is an $s > 0$ such that
\begin{equation*}
| \{ |Du| \geq s \}| < | \{ |Dv| \geq s \}|
\end{equation*}
and
\begin{equation*}
| \{ |Du| \geq t \}| \leq | \{ |Dv| \geq t \}|  \quad \mbox{for every } t \geq s.
\end{equation*}
Observe that measure-tight implies tight: this follows from the fact that $u$ tighter than $v$ implies that $u$ is measure-tighter than $v$, which can be deduced from the following:
\begin{equation*}
| B(x,\delta_1) \cap \{ \Lu u > \Lu u(x) - \delta_2 \} | > 0,
\end{equation*}
for every $\delta_1, \delta_2 > 0$ and $x \in U$.

One could also adopt an even stronger notion of optimality and say that $u$ is tighter than $v$ if the $L^p$ norm of $|Du|$ is less than that of $|Dv|$ for all sufficiently large $p$.

Our philosophical objection to both of these definitions is that they depend on the structure of Lebesgue measure, and it is not clear whether a different measure would yield a substantially different definition.  One would hope that the proper notion of optimal extension would depend only on metric space properties of the domain (as in the case when $m=1$) and not on any extraneous structure such as a measure.  Nonetheless, it would certainly be interesting if existence or uniqueness could be established using these alternative definitions.  A different measure theoretic approach in the case that $Z = \R$ and $X$ is a more general measurable length space appears in \cite{MR2242966} (which explores the idea of replacing the Lipschitz norm in the definition of AMLE with the essential supremum of the local Lipschitz norm).

\subsection{Notation and preliminaries}

If $U \subseteq \R^n$, then $C^k(U, \R^m)$ denotes the space of $k$-times continuously differentiable functions from $U$ to $\R^m$. If $u = (u^1, ..., u^m) \in C^1(U, \R^m)$ and $x \in U$, then $Du(x)$ denotes the $m \times n$ matrix with $i,j$-th entry $u^i_{x_j}(x)$. When $u \in C^1(U, \R^m)$, $a \in C(U, \R^n)$, and $|a| = 1$, we write $u_a := u_{x_i} a^i$ and $u_{aa} := u_{x_i x_j} a^i a^j$. Note that $u_{aa} \neq (u_a)_a$ in general.

Aside from the definition of $\Lu u$ above, the most important convention in this article is our choice of matrix norm. Given an $n \times n$ symmetric matrix $A$, we let
\begin{equation*}
\lambda_1(A) \geq \cdots \geq \lambda_n(A),
\end{equation*}
denote the eigenvalues of $A$ listed in non-increasing order. Our norm \EQ{defNorm} can then be written
\begin{equation*}
|A|^2 = \max_{|a| = 1} |A a|^2 = \lambda_1(A^t A).
\end{equation*}
Recall that we choose this norm because it satisfies
\begin{equation*}
\Lu u(x) = |Du(x)|,
\end{equation*}
whenever $U \subseteq \R^n$ is open, $u \in C^1(U,\R^m)$, and $x \in U$. We stress again that this norm is {\em not} induced by the usual matrix inner product $\langle A, B \rangle := \trace(A^t B)$. Moreover, the map $A \mapsto |A|$ is not smooth.

When $\lambda_1(A^t A) > \lambda_2(A^t A)$, there is a unit vector $a$ spanning the principal eigenspace of $A^t A$. In this case, the map $A \mapsto |A|$ is smooth in a neighborhood of $A$ and satisfies
\begin{equation}
\label{dnorm}
|A + s B|^2 = |A|^2 + 2 s (A a)^t (Ba) + O(s^2),
\end{equation}
for any $m \times n$ matrix $B$ and $s \in \R$. Moreover, the constant in the $O(s^2)$ term depends continuously on $|A|$, $|B|$, and $(\lambda_1(A^t A) - \lambda_2(A^t A))^{-1}$.

\subsection{Acknowledgements}

The first co-author has collaborated with Assaf Naor on the related project of showing that Lipschitz functions from subsets of length spaces to {\em trees} have unique optimal Lipschitz extensions \cite{naorsheffield}. The work with Naor also contains a general definition of discrete infinity harmonic (similar to the one we use here), an existence result for discrete infinity harmonic extensions, and a one-triangle version of the counterexample in \SEC{secGraph}.

We also thank Robert Jensen for many helpful discussions.

%%%%%

\section{Tight functions on graphs}
\label{secGraph}

\subsection{The finite case}

As stated in the introduction, discrete infinity harmonic extensions are not unique in the vector-valued case. To see this, consider the two distinct discrete infinity harmonic embeddings of a finite graph into $\R^2$ displayed in Figure \ref{nonuniq}.
\begin{figure}[h]
\caption{Two distinct discrete infinity harmonic embeddings of a finite graph into $\R^2$.}
\label{nonuniq}
\bigskip
\includegraphics[width=3in]{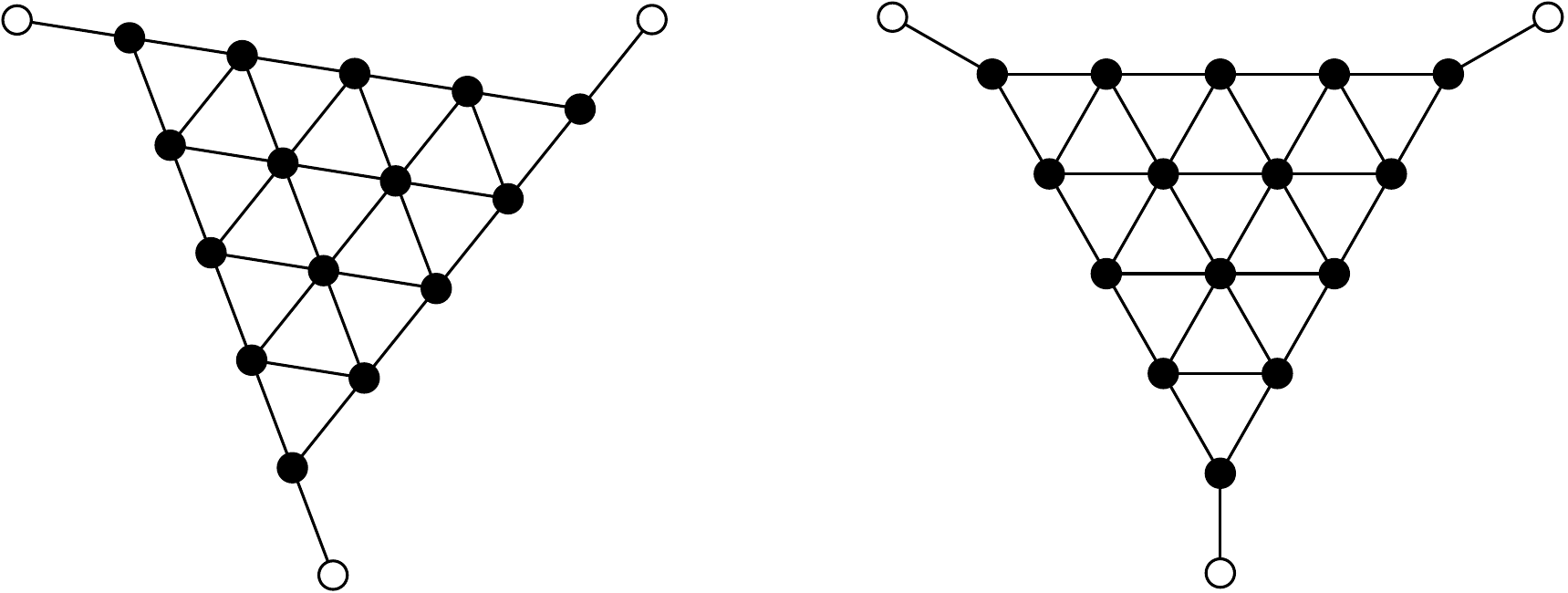}
\end{figure}
Here the outer three vertices are fixed and we are free to select the positions of the inner vertices. The graph on the right has a smaller overall Lipschitz constant.

In fact, this example demonstrates more than just a failure of uniqueness. One can show that if we fix any additional  vertex $x$ on the boundary of the inner triangle in the left hand graph, then the extension is the unique one with the given Lipschitz constant.  (Once we fix such a vertex $x$, the corner vertex colinear with $x$ and a boundary point becomes fixed. From this, it follows that the other two corners are fixed, etc.) Thus any algorithm that identifies an extension as having a sub-optimal Lipschitz constant must consider many vertices simultaneously. In particular, the non-local nature of the definition of tighter in \EQ{defTightG} is necessary.

In the scalar case, discrete infinity harmonic (hence tight) extensions can be computed using the following procedure (see \cite{MR1685133} and \cite{MR2449057}):
\begin{quote}
Find a path $x_0, ..., x_n$ in $G$ with $x_0, x_n \in Y$, $x_1, ..., x_{n-1} \in X \setminus Y$, and $n > 1$ such that $(u(x_n) - u(x_0))/n$ is as large as possible. Extend $g$ to $Y \cup \{ x_i \}$ by setting $g(x_i) = (1 - i/n) u(x_0) + (i/n) u(x_n)$ and then set $Y := Y \cup \{ x_i \}$. Repeat until $X = Y$.
\end{quote}
While we do not know of such a simple algorithm in the vector-valued case, the existence and uniqueness proof below is based on the idea that tight functions can be determined ``steepest part first.''

\begin{proof}[{\bf Proof of \THM{EUG}}]
Given any extension $u : X \to \R^m$ of $g$, let $lv(u) \in \R^{|X \setminus Y|}$ be the values of $\Lg u$ listed in non-increasing order. Observe that if $u$ is tighter than some $v : X \to \R^m$, then $lv(u) < lv(v)$ in the lexicographical ordering on $\R^{|X \setminus Y|}$. Thus, if $lv(u)$ is lexicographically minimal among all extensions $u : X \to \R^m$ of $g$, then $u$ is a tight extension of $g$.

To find a lexicographically minimal extension of $g$, simply observe that the set of obtainable $lv(u)$  (as $u$ ranges over all possible extensions) is an unbounded closed polyhedron in the subset of $[0,\infty)^{|X \setminus Y|}$ consisting of points with non-increasing coordinates.  An induction on the dimension $|X \setminus Y|$ shows that that the lexicographic minimum is obtained on this set.  (The set of points minimizing the first coordinate is bounded and closed and non-empty, the subset of this set minimizing the second coordinate is thus bounded and closed and non-empty, etc.)  Thus, there indeed exists a $u$ for which $lv(u)$ is lexicographically minimal, and this $u$ is tight.

%fix an arbitrary extension $u_0 : X \to \R^m$ of $g$. Observe that the set of $u$ extending $g$ such that $lv(u) < lv(u_0)$ forms a bounded subset of %$\R^{|X|}$. Since the map $u \mapsto lv(u)$ is continuous and the lexicographical order are continuous, we may therefore choose a tight extension $u : %X \to \R^m$ of $g$ such that either $u=u_0$ or $lv(u) < lv(u_0)$.

Now consider an arbitrary extension $v$ not equal to $u$.  We claim that $u$ is tighter than $v$ (and in particular that $u$ is the unique tight extension). For contradiction, suppose $u$ is not tighter than $v$.  Since $v$ is also not tighter than $u$, we must have
\begin{equation*}
K := \max \{ \Lg u : \Lg u \neq \Lg v \} = \max \{ \Lg v :  \Lg u \neq \Lg v \},
\end{equation*}
where we define $K = 0$ if $\Lg u \equiv \Lg v$.

Set $w := (u + v) / 2$ and observe that
\begin{equation}
\label{edgeconvex}
|w(x) - w(y)| \leq \frac{1}{2} |u(x) - u(y)| + \frac{1}{2} |v(x) - v(y)|,
\end{equation}
whenever $x \sim_E y$. In particular, if $Su(x) \geq K$, then
\begin{equation*}
Sw(x) \leq \frac{Su(x) + Sv(x)}{2} \leq Su(x).
\end{equation*}
Since $w$ is not tighter than $u$, we must have $\Lg w = \Lg u$ on $\{ \Lg u \geq K \}$.

Consider the set of edges
\begin{equation*}
\tilde E := \{ (x,y) \in E : |w(x) - w(y)| \geq K \},
\end{equation*}
of length at least $K$. Since equality holds in \eqref{edgeconvex} only if $u(x) - u(y) = v(x) - v(y)$, we conclude that $u(x) - u(y) = v(x) - v(y)$ whenever $x \sim_{\tilde E} y$. Therefore $u(x) = v(x)$ whenever $x \in X$ is connected to $Y$ via edges in $\tilde E$.

If every vertex in $\{ \Lg w \geq K \}$ is connected to $Y$ via edges in $\tilde E$ then we may conclude that $u \equiv v$ on $\{ \Lg w \geq K \} \supseteq \{ \Lg u \geq K \}$. In particular, $\Lg u = \Lg v$ on $\{ \Lg u \geq K \}$. Since $v$ is not tighter than $u$, we must also have $\Lg u = \Lg v$ on $\{ \Lg v \geq K \}$. Thus $K = 0$ and $u \equiv v$. Since we assumed $u \not \equiv v$, there must be a non-empty maximal $\tilde E$-connected set $X_0 \subseteq \{ \Lg w \geq K \}$.

For small $\ep > 0$, define $w_\ep : X \to \R^m$ by
\begin{equation*}
w_\ep(x) := \begin{cases}
w(x) & \mbox{if } x \in X \setminus X_0, \\
(1 - \ep) w(x) & \mbox{if } x \in X_0.
\end{cases}
\end{equation*}
Since $|w(x) - w(y)| < K$ every $(x,y) \in E \setminus \tilde E$, we may select a small $\ep > 0$ such that $|w_\ep(x) - w_\ep(y)| < K$ for $(x,y) \in E \setminus \tilde E$. Since $X_0$ has no outgoing edges in $\tilde E$, we also have $|w_\ep(x) - w_\ep(y)| \leq |w(x) - w(y)|$ if $(x,y) \in \tilde E$ and $Sw_\ep(x) = (1 - \ep) Sw(x)$ if $x \in X_0$. In particular,
\begin{equation*}
\max \{ \Lg w_\ep : \Lg w_\ep \neq \Lg u \} < \max \{ \Lg u : \Lg w_\ep \neq \Lg u \},
\end{equation*}
contradicting the fact that $u$ is tight.
\end{proof}

One might expect the convergence of the $p$-harmonic extensions to be corollary of the following easy fact: If $v$ is tighter than $u$, then
\begin{equation*}
\sum_{x \in X \setminus Y} (\Lg u(x))^p > \sum_{x \in X \setminus Y} (\Lg v(x))^p,
\end{equation*}
for all sufficiently large $p > 0$. However, it seems a more subtle argument is required.

\begin{proof}[{\bf Proof of \THM{PlimG}}]
Since the $u_p$ are bounded as $p \to \infty$, we know that any sequence $p_k \to \infty$ has a further subsequence along which $u_p$ converges to some function $u : X \to \R^m$.  Suppose for contradiction that $v : X \to \R^m$ is the unique tight extension of $g$ and $v \not \equiv u$. Let
\begin{equation*}
K := \max \{ \Lg u : \Lg v \neq \Lg u \} > \max \{ \Lg v : \Lg v \neq \Lg u \},
\end{equation*}
and
\begin{equation*}
v_k(x) := \left\{ \begin{array}{ll}
u_{p_k}(x) & \mrbox{if } \Lg u(x) > K, \\
v(x) & \mrbox{otherwise.}
\end{array} \right.
\end{equation*}
We show that $v_k$ has smaller $p_k$-energy than $u_{p_k}$ for sufficiently large $k$.

Let
\begin{equation*}
Z := Y \cup \{ \Lg u > K \} = Y \cup \{ \Lg v > K \},
\end{equation*}
and observe that $v |_Z$ is tight on $G |_Z$, where $G |_Z$ is the reduct of $G$ to the vertex set $Z$. Since $\Lg u = \Lg v$ on $Z$, the proof of \THM{EUG} implies that $u$ must also be tight on $G |_Z$. In particular, $u = v$ on $Z$ and $v_k \to v$ as $k \to \infty$.

Observe that
\begin{equation*}
\Lg v_k = \Lg u_{p_k} \quad \mbox{on } \{ \Lg u > K \},
\end{equation*}
for all sufficiently large $k$. Moreover, if we define
\begin{equation*}
\delta := \max \{ Su : Sv \neq Su \} - \max \{ Sv : Sv \neq Su \},
\end{equation*}
then
\begin{equation*}
\Lg v_k < K - \delta / 2 \quad \mbox{on } \{ \Lg u \leq K \},
\end{equation*}
and
\begin{equation*}
\Lg u_{p_k} \geq K - \delta / 4 \quad \mbox{ on } \{ \Lg u = K \},
\end{equation*}
 for all sufficiently large $k$. Thus we compute
\begin{align*}
\sum_{X \setminus Y} (\Lg u_{p_k})^{p_k} - \sum_{X \setminus Y} (\Lg v_k(x))^{p_k}
& \geq \sum_{\{ \Lg u = K \}} (\Lg u_{p_k})^{p_k} - \sum_{\{ \Lg u \leq K \}} (\Lg v_k(x))^{p_k} \\
& \geq (K - \delta/4)^{p_k} - |X \setminus Y| (K - \delta/2)^{p_k} \\
& > 0,
\end{align*}
for all sufficiently large $k$.
\end{proof}

\subsection{Non-uniqueness of tight extensions on infinite graphs}

Recall if $G = (X, E, Y)$ is finite and $u, v : X \to \R$ are tight, then
\begin{equation}
\label{linearcmp}
\max_X |u - v| = \max_Y |u - v|.
\end{equation}
When $u, v : X \to \R^m$ and $m > 1$, this fails. Indeed, consider the two embeddings of a four-vertex star-shaped graph into $\R^2$ displayed in Figure \ref{gadget1}.
\begin{figure}[h]
\caption{Two embeddings of a star-shaped four-vertex graph into $\R^2$.}
\label{gadget1}
\includegraphics[width=2in]{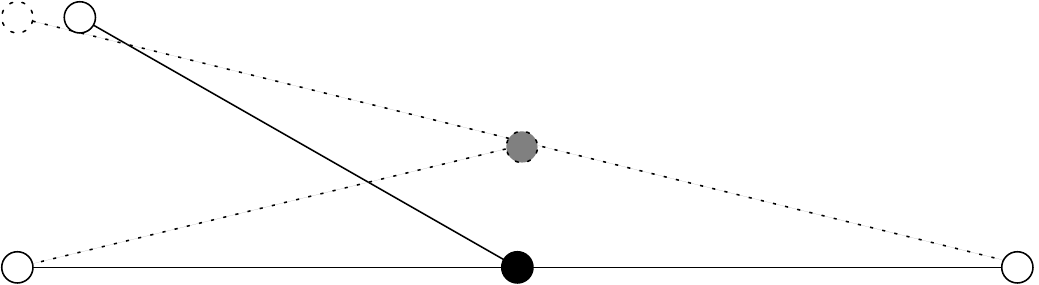}
\end{figure}
In this example, the hub of the star is an interior vertex while the three points of the star are boundary vertices. The two fixed boundary vertices are mapped to $(1,0)$ and $(-1, 0)$. The free boundary vertex is mapped to $(-(1 - \ep^2)^{1/2}, \ep)$ or $(-1, \ep)$ and the interior vertex is then mapped to $(0,0)$ or $(0, \ep/2)$, respectively. Since the free boundary vertex moves by only
\begin{equation*}
r - (r^2 - \ep^2)^{1/2} = \frac{\ep^2}{2r} + O(\ep^4),
\end{equation*}
we see that \eqref{linearcmp} fails in the vector-valued case.

We call this example an {\em amplifier gadget}, as it amplifies certain perturbations of the boundary data. We think of the free boundary vertex as the {\em input} and the interior vertex as the {\em output}. To obtain our non-uniqueness example, we will
chain amplifier gadgets together.  That is, we will start with a single amplifier gadget and then inductively attach the output of a new gadget to the input of the existing compound gadget (see Figure \ref{gadget2}).  As the number of gadgets in the chain of gadgets gets longer, the size of the input change to the last gadget needed to affect a given output change in the first gadget will tend to zero.  (Recall that we have existence and uniqueness for finite graphs.)  Once we extend the chain to infinite length, we can construct two tight functions $u$ and $v$ from the graph vertices to $\R^2$ which disagree on the first gadget in the chain, but whose difference tends to zero as one moves further up the chain.  The values $Su$ and $Sv$ will increase (converging to some limit) as one goes further up the chain.  We will have to prove that the $u$ and $v$ we construct are tight: it will be obvious from construction and our finite graph results that the functions restricted to the first $n$ gadgets in the chain are tight --- that is, one cannot construct a tighter function than $u$ (the argument for $v$ is similar) by modifying $u$ on a finite set of interior vertices.  However, we will also show that changing $u$ on an infinite set must make $Su$ larger on an infinite set (including places where $Su$ is arbitrarily close to its maximum).

To be precise, we consider the infinite graph $G = (X, E, Y)$ with vertices
\begin{equation*}
X := \{ a_i \} \cup \{ b_i \} \cup \{ c_i \},
\end{equation*}
edges
\begin{equation*}
E := \{ ( a_i, c_i) \} \cup \{ (b_i, c_i) \} \cup \{ (c_i, c_{i+1} ) \},
\end{equation*}
and boundary vertices
\begin{equation*}
Y := \{ a_i \} \cup \{ b_i \},
\end{equation*}
where $i$ ranges over the natural numbers $\mathbb{N}$. We let $r_0 := 1$ and $\ep_0 = 1/4$ and then inductively select $(r_i, \ep_i)$ such that
\begin{equation}
\label{ridef}
r_{i+1} = (r_i^2 + (\ep_i/2)^2)^{1/2},
\end{equation}
and
\begin{equation}
\label{epidef}
\ep_{i+1} / 2 = r_i - (r_i^2 - \ep_i^2)^{1/2}.
\end{equation}
We let $g : Y \to \R^2$ be the unique map such that $g(a_0) = (-1,0)$, $g(b_0) = (1,0)$,
\begin{equation}
\label{abrotate}
\frac{g(a_{i+1}) - g(b_{i+1})}{2r_{i+1}} = \left[ \begin{matrix}0 & 1/\sqrt 2 \\ -1/\sqrt 2 & 0 \end{matrix} \right] \frac{g(a_i) - g(b_i)}{2r_i},
\end{equation}
and
\begin{equation}
\label{abshift}
\frac{g(a_{i+1}) + g(b_{i+1})}{2} = g(a_i) + \ep_i \left[ \begin{matrix}0 & 1/\sqrt 2 \\ -1/\sqrt 2 & 0 \end{matrix} \right] \frac{g(a_i) - g(b_i)}{2r_i}.
\end{equation}
Finally, we define two extensions $u, v : X \to \R^2$ of $g$ by
\begin{equation*}
u(c_i) := \begin{cases}
\frac{1}{2}(g(a_i) + g(b_i)) & \mbox{if $i$ even}, \\
\frac{1}{4}(g(a_{i+1}) + g(b_{i+1})+ 2 g(b_i)) & \mbox{if $i$ odd},
\end{cases}
\end{equation*}
and
\begin{equation*}
v(c_i) := \begin{cases}
\frac{1}{2}(g(a_i) + g(b_i)) & \mbox{if $i$ odd}, \\
\frac{1}{4}(g(a_{i+1}) + g(b_{i+1})+ 2 g(b_i)) & \mbox{if $i$ even}.
\end{cases}
\end{equation*}
See Figure \ref{gadget2} for a diagram of these embeddings.

\begin{figure}[h]
\caption{The restriction of the infinite graph example to the vertices $\{ a_i, b_i, c_i, a_{i+1}, b_{i+1}, c_{i+1}, c_{i+2} \}$ in the case $i$ is even. Here we have rotated the embedding by $\pi i/2$.}
\label{gadget2}
\bigskip
\includegraphics[width=3in]{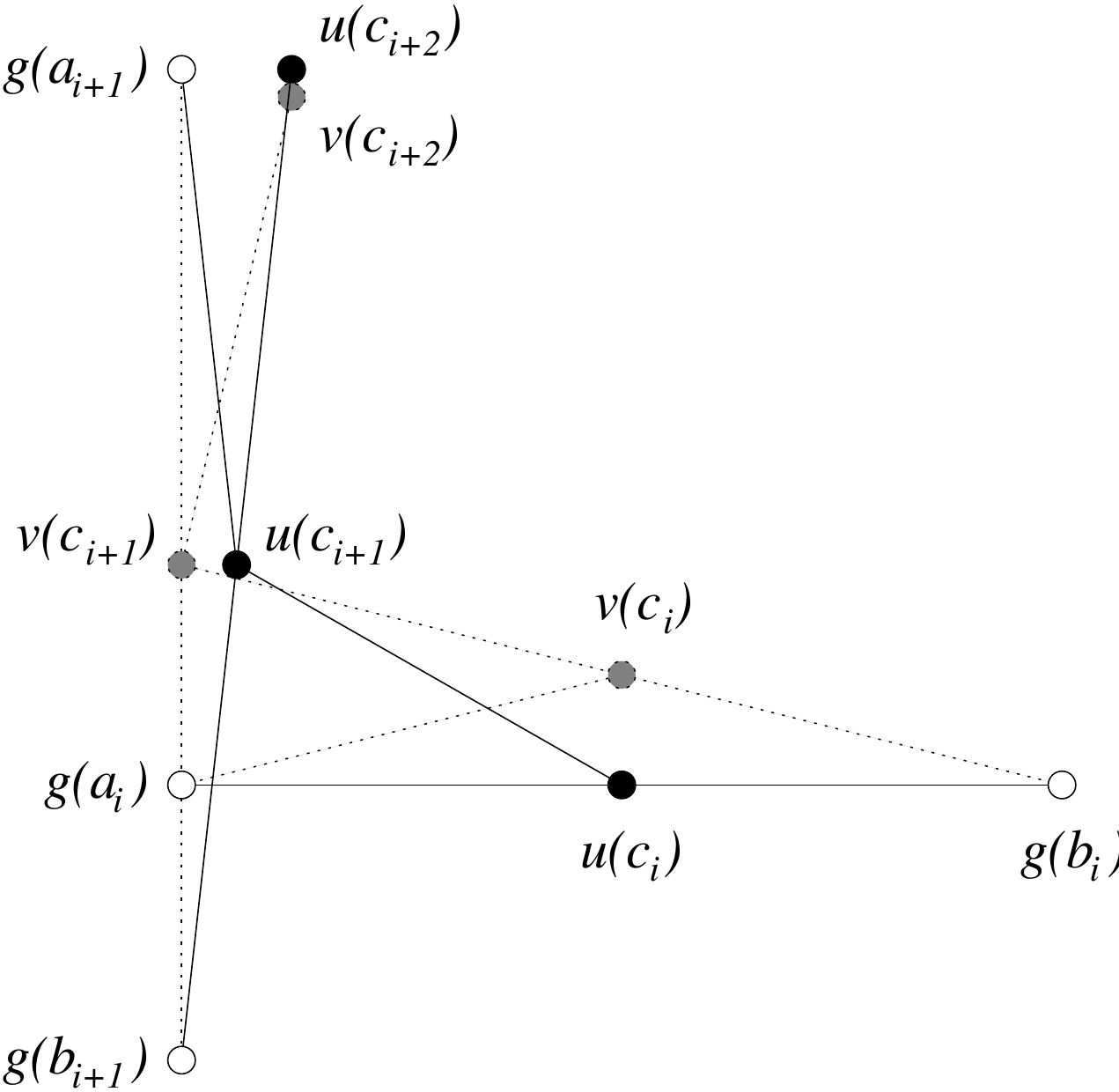}
\end{figure}

\begin{proposition}
\label{nonunique}
The function $g : Y \to \R^2$ is bounded and both $u, v : X \to \R^2$ are tight on $G$. In particular, tight extensions on infinite graphs are not necessarily unique even if the boundary data is bounded.
\end{proposition}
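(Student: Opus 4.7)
The statement splits into boundedness of $g$, distinctness of $u$ and $v$, and tightness of each; the substantive content is tightness.

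I would first dispose of boundedness by extracting the decay of $\ep_i$. From \EQ{epidef} and the identity $r-\sqrt{r^2-\ep^2}=\ep^2/(r+\sqrt{r^2-\ep^2})$, one gets $\ep_{i+1}\leq \ep_i^2/r_i$, so $\ep_i\to 0$ doubly exponentially; \EQ{ridef} then gives $r_i^2 = 1+\tfrac{1}{4}\sum_{j<i}\ep_j^2 \to R^2<\infty$. For the midpoints $q_i := (g(a_i)+g(b_i))/2$, equation \EQ{abshift} reads $q_{i+1}-q_i = r_i\hat d_i + \ep_i\hat n_i$, where $\hat d_i := (g(a_i)-g(b_i))/2r_i$ and $\hat n_i$ is its image under the rotation matrix of \EQ{abrotate}. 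Because that rotation has finite order, the partial sums of $r_i\hat d_i$ regroup into telescoping differences of the form $(r_i - r_{i+2})\hat d_i$, which are summable by convergence of $r_i$; combined with summability of $\ep_i$, this shows $\{q_i\}$ is bounded and hence $g$ is bounded. Distinctness of $u$ and $v$ is immediate from their formulas at $c_0$.

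For tightness of $u$ (the argument for $v$ is symmetric), I would show that every extension $w : X \to \R^2$ of $g$ with $w\not\equiv u$ fails to be tighter than $u$. First, compute $\ell_i^u := \Lg u(c_i)$ and $\ell_i^v := \Lg v(c_i)$ directly from the defining formulas and the recursions \EQ{ridef}--\EQ{epidef}, and verify that both sequences are strictly increasing and share a common finite limit $L$. Then split on whether $w$ differs from $u$ at only finitely many or infinitely many vertices. In the finite case, choose minimal $N$ with $w(c_i)=u(c_i)$ for $i>N$ and consider the finite truncation $G_N$ on vertex set $\{a_i,b_i : i\leq N\} \cup \{c_i : i\leq N+1\}$, promoting $c_{N+1}$ to a boundary vertex carrying the value $u(c_{N+1})$. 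The plan is to show by back-propagation along the amplifier chain that $u|_{G_N}$ is the unique tight extension on $G_N$ of the induced boundary data: fixing $c_{N+1}$ forces the unique optimal placement of $c_N$, which in turn forces $c_{N-1}$, and so on. Applying \THM{EUG} to $G_N$ then yields an interior vertex $c_i\in G_N$ witnessing that $w|_{G_N}$ is not tighter than $u|_{G_N}$; since $\ell_i^u<L$ for every finite $i$, that witness survives the return to the infinite graph.

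In the infinite case, the key geometric input is a quantitative form of the amplifier property: at each gadget, moving $c_i$ away from $u(c_i)$ forces $\max\{|w(c_i)-g(a_i)|,|w(c_i)-g(b_i)|,|w(c_i)-w(c_{i+1})|\} \geq \ell_i^v - \delta_i$, where $\delta_i$ is controlled by $|w(c_{i+1})-u(c_{i+1})|$. Since $\ell_i^v\to L$ and $w(c_i)\neq u(c_i)$ for arbitrarily large $i$, this produces values of $\Lg w$ arbitrarily close to $L$, so $\sup\{\Lg w : \Lg u < \Lg w\}\geq L \geq \sup\{\Lg u : \Lg u > \Lg w\}$, preventing $w$ from being strictly tighter than $u$. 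The main obstacle I anticipate is the back-propagation claim in the finite case: it requires a careful analysis of how a single amplifier gadget responds to non-ideal boundary data, and this is precisely the geometric feature of the construction that lets the infinite chain produce two genuinely distinct tight extensions.
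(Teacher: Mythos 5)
Your boundedness argument and your overall plan (show no competitor $w$ is tighter, splitting according to whether $w$ disagrees with $u$ at finitely or infinitely many vertices) run parallel to the paper, and the finite-disagreement half can be made to work. But it is heavier than necessary, and your transfer step back to the infinite graph is justified by the wrong reason: "$\ell^u_i<L$" is irrelevant. What you actually need is that the vertex $c_{N+1}$ cannot enter the set $\{\Lg u>\Lg w\}$ with a large value; this holds because $\Lg u(c_{N+1})$ is attained on edges to vertices where $w=u$ (to $a_{N+1},b_{N+1}$ when $N+1$ is even, to $b_{N+1}$ and $c_{N+2}$ when $N+1$ is odd), so $\Lg w(c_{N+1})\geq \Lg u(c_{N+1})$. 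The paper avoids the truncation/\THM{EUG} detour entirely: it takes the largest $i$ with $w(c_i)\neq u(c_i)$ and uses the unique-minimizer (midpoint/circumcenter) property at that single gadget to get $\Lg w(c_i)>\Lg u(c_i)=\max_{j\leq i}\Lg u(c_j)$. (Also, $i\mapsto\Lg u(c_i)$ is only nondecreasing, $r_0,r_2,r_2,r_4,r_4,\dots$, not strictly increasing; harmless but worth fixing.)

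The genuine gap is in your infinite case. The inequality you propose is essentially vacuous: for \emph{every} extension $w$ one has $\Lg w(c_i)\geq\max\{|w(c_i)-g(a_i)|,|w(c_i)-g(b_i)|\}\geq r_i\to L$, so "values of $\Lg w$ arbitrarily close to $L$" carries no information, and your error term $\delta_i$ depends on $|w(c_{i+1})-u(c_{i+1})|$, over which you have no control precisely in the infinite-disagreement case. More importantly, the deduction "$\Lg w$ close to $L$ at disagreement vertices, hence $\sup\{\Lg w:\Lg w>\Lg u\}\geq L$" is a non sequitur: you need those large values to occur at vertices where the \emph{strict} comparison $\Lg w>\Lg u$ holds, and since $\Lg u(c_i)\to L$ as well, closeness to $L$ does not give this. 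The function $v$ itself illustrates the danger: it disagrees with $u$ at every vertex and $\Lg v\to L$, yet $\Lg v<\Lg u$ at every odd vertex. The missing idea is the parity split the paper uses: at even $i$, $u(c_i)$ is the midpoint of the two \emph{fixed boundary} values $g(a_i),g(b_i)$, each at distance exactly $\Lg u(c_i)=r_i$, so $w(c_i)\neq u(c_i)$ forces $\Lg w(c_i)>\Lg u(c_i)\geq\max_{j\leq i}\Lg u(c_j)$ independently of the rest of $w$; infinitely many even disagreements therefore force $\sup\{\Lg w:\Lg w>\Lg u\}\geq L\geq\sup\{\Lg u:\Lg u>\Lg w\}$, so $w$ is not tighter. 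Only after the even vertices are pinned for large $i$ can one run the analogous argument at odd $i$ (where $u(c_i)$ is the midpoint of $g(b_i)$ and the now-fixed $w(c_{i+1})=u(c_{i+1})$), which pins all large vertices and reduces to the finite case. Without this two-step structure your infinite case does not close.
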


\begin{proof}
We first check that $g$ is bounded. From \eqref{ridef} it follows that $i \mapsto r_i$ is increasing. Together with \eqref{epidef}, this implies that $\ep_{i+1} \leq 2 \ep_i^2$. Since $\ep_0 = 1/4$, it follows that $\ep_i \to 0$ as $i \to \infty$ and $\sum_{i=0}^\infty \ep_i < \infty$. Since \eqref{ridef} implies $r_{i+1} \leq r_i + \ep_i$, it then follows that $r_i \to \bar r < \infty$ as $i \to \infty$.

Now consider how the points $g(a_i)$ are generated in \eqref{abrotate} and \eqref{abshift}. During each iteration, the process rotates by $\pi/2$ radians clockwise. In particular, if we take four steps at once, we obtain
\begin{equation*}
|g(a_{i+4}) - g(a_i)| = O(|r_{i+4} - r_i|) + O(|\ep_{i+4} - \ep_i|) = O(\ep_i).
\end{equation*}
Since $\sum_{i=0}^\infty \ep_i < \infty$, we see that $|g(a_{4i}) - g(a_0)|$ is bounded uniformly in $i$. This implies that $g$ is bounded.

Next we check that $u$ is tight. Suppose instead that $w : X \to \R^2$ is tighter than $u$. Observe that our choice of $r_{i+1}$ in \eqref{ridef} guarantees that $i \mapsto \Lg u(c_i)$ is nondecreasing. Indeed, if $i$ is even then
\begin{equation*}
\Lg u(c_i) = r_i,
\end{equation*}
and if $i$ is odd then
\begin{equation*}
\Lg u(c_i) = (r_i^2 + (\ep_i/2)^2)^{1/2} = r_{i+1} > r_i.
\end{equation*}

When $i$ is even we have
\begin{equation*}
u(c_i) = \frac{1}{2} (g(a_i) + g(b_i)),
\end{equation*}
and
\begin{equation*}
\Lg u(c_i) = |u(c_i) - g(a_i)| = |u(c_i) - g(b_i)|.
\end{equation*}
Thus if $w(c_i) \neq u(c_i)$ we have
\begin{equation*}
\Lg w(c_i) > \Lg u(c_i) = \max_{j \leq i} \Lg u(c_j).
\end{equation*}
Since $w$ is tighter than $u$, we must have $w(c_i) = u(c_i)$ for all large even $i$.

Given that $i > 1$ is even and $u(c_i) = w(c_i)$, observe that
\begin{equation*}
u(c_{i-1}) = \frac{1}{2} (u(c_i) + g(b_i)),
\end{equation*}
and
\begin{equation*}
\Lg u(c_{i-1}) = |u(c_{i-1}) - g(b_i)| = |u(c_{i-1}) - u(c_i)|.
\end{equation*}
In particular, if $w(c_{i-1}) \neq u(c_{i-1})$, then
\begin{equation*}
\Lg w(c_{i-1}) > \Lg u(c_{i-1}) = \max_{j \leq i - 1} \Lg u(c_j).
\end{equation*}
Thus we conclude that $w(c_i) = u(c_i)$ for all sufficiently large $i$.

Now, let $i$ be the largest integer such that $w(c_i) \neq u(c_i)$. The arguments we just gave immediately give
\begin{equation*}
\Lg w(c_i) > \Lg u(c_i) = \max_{j \leq i} \Lg u(c_j),
\end{equation*}
and thus $w$ can not be tighter than $u$. It follows that $u$ is tight.

The argument that $v$ is tight is the same except that we switch odd and even.
\end{proof}

\begin{remark}
The definition, existence, and uniqueness of tight extensions easily generalizes to the case of weighted graphs. That is, when we have a function $d : E \to (0, \infty)$ and define
\begin{equation*}
\Lg u(x) := \max_{y \sim_E x} \frac{|u(y) - u(x)|}{d(y,x)}.
\end{equation*}
With this generalization, it is possible modify our non-uniqueness example so that the graph has finite diameter. That is, we can set
\begin{equation*}
d(a_i,c_i) = d(b_i, c_i) =  d(c_i, c_{i+1}) = \frac{1}{2^i},
\end{equation*}
and then reduce the size of the amplifier gadgets so that the boundary data is still bounded and Lipschitz.

We thus obtain an example of a finite diameter and connected length space $X$, a non-empty subset $Y \subseteq X$, and a Lipschitz function $g : Y \to \R^2$ such that $g$ has at least two tight extensions to $X$.
\end{remark}

\subsection{An edge-based treatment of tight extensions}

We finish this section by providing an alternative characterization of tight extensions on graphs. Given $u : X \to \R^m$ and $e = \{ x, y \} \in E \setminus Y^2$, we define $\Le u(e) := |u(x) - u(y)|$. If two functions $u, v : X \to \R^m$ agree on $Y$ and satisfy
\begin{equation*}
\sup \{ \Le u : \Le u < \Le v \} > \sup \{ \Le v : \Le u > \Le v \},
\end{equation*}
we say that $v$ is {\em edge-tighter} on $G$. We say that $u$ is {\em edge-tight} if there is no edge-tighter $v$.

\begin{proposition}
\label{edgeTight}
Let $G = (X, E, Y)$ denote a connected finite graph with vertex set $X$, edge set $E$, and a distinguished non-empty set of vertices $Y \subseteq X$. A function $u : X \to \R^m$ is tight on $G$ if and only if it is edge-tight on $G$.
\end{proposition}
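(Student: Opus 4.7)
My plan is to mirror the proof of Theorem~\ref{EUG}, using $\Le$ in place of $\Lg$ throughout, and then to verify that the resulting unique edge-tight extension coincides with the unique tight extension from Theorem~\ref{EUG}.

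First, I would show existence and uniqueness of edge-tight extensions by running the proof of Theorem~\ref{EUG} verbatim with $\Le$ in place of $\Lg$. Observe that $v$ is edge-tighter than $u$ iff the sorted (non-increasing) edge-length vector $le(v) \in [0,\infty)^{|E \setminus Y^2|}$ is lexicographically less than $le(u)$; existence follows because the set of achievable $le(u)$ is a closed unbounded polyhedron in the non-increasing cone, so it has a lex-minimum, and any realizer is edge-tight. For uniqueness, given edge-tight $u$ and another extension $v$, take $w = (u+v)/2$. Convexity of the Euclidean norm gives $\Le w(e) \leq \frac{1}{2}(\Le u(e) + \Le v(e))$, with equality iff $u(x) - u(y)$ and $v(x) - v(y)$ are non-negatively parallel. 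Setting $K = \max\{\Le u(e) : \Le u(e) \neq \Le v(e)\}$ and $\tilde E = \{e : \Le w(e) \geq K\}$, equality on $\tilde E$ forces $u(x) - u(y) = v(x) - v(y)$ along every edge of $\tilde E$. Either this propagates from $Y$ through $\tilde E$ to force $u \equiv v$, or there is a $\tilde E$-connected component $X_0 \subseteq X \setminus Y$ on which shrinking $w$ by a factor $1 - \varepsilon$ produces a strictly edge-tighter function, contradicting edge-tightness of $u$.

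Second, I would identify the unique edge-tight extension $u^{**}$ with the unique tight extension $u^*$ from Theorem~\ref{EUG}. The key relation is $\Lg u(x) = \max_{e \ni x} \Le u(e)$: whenever $\Lg u(x) > \Lg v(x)$, the maximizing edge $e \ni x$ for $u$ satisfies $\Le u(e) = \Lg u(x) > \Lg v(x) \geq \Le v(e)$, so every vertex-level witness of one function beating the other lifts to an edge-level witness at the same value. Combined with the uniqueness of both notions, if $u^* \neq u^{**}$ then by Theorem~\ref{EUG} $u^*$ is vertex-tighter than $u^{**}$, while edge-tight uniqueness gives $u^{**}$ edge-tighter than $u^*$; applying the midpoint/shrinking construction of the previous paragraph to $w = (u^* + u^{**})/2$ then produces a function that violates one of the two uniqueness statements, forcing $u^* = u^{**}$.

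The main obstacle is the asymmetry between edge advantages and vertex advantages: an edge $e \ni x$ with $\Le v(e) > \Le u(e)$ need not witness a vertex advantage at $x$, since $\Lg u(x)$ might be attained at a different edge of $x$ where $u$ and $v$ happen to agree. The propagation argument through $\tilde E$-connected sets is precisely the tool that neutralizes this asymmetry, by forcing $u - v$ to be rigid along the top-level edges and therefore consistent between the vertex and edge pictures.
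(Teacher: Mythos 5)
Your first step---existence and uniqueness of edge-tight extensions, together with the clause that the edge-tight extension is edge-tighter than every competitor, obtained by rerunning the proof of Theorem~\ref{EUG} with $\Le$ in place of $\Lg$---is exactly the paper's first observation and is fine. The gap is in your second step, the identification $u^*=u^{**}$. The sentence ``applying the midpoint/shrinking construction of the previous paragraph to $w=(u^*+u^{**})/2$ then produces a function that violates one of the two uniqueness statements'' is not justified, and as stated it fails: the midpoint construction in Theorem~\ref{EUG} is run under a tie hypothesis, $\max\{\Lg u:\Lg u\ne\Lg v\}=\max\{\Lg v:\Lg u\ne\Lg v\}$, whereas here there is no tie in either sense ($u^*$ is strictly tighter than $u^{**}$, and $u^{**}$ strictly edge-tighter than $u^*$). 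If you simply average, the convexity inequalities give only that $w$ is edge-tighter than $u^*$ and vertex-tighter than $u^{**}$; neither conclusion contradicts anything, since $u^*$ is only claimed vertex-minimal and $u^{**}$ only edge-minimal. Your lifting relation $\Lg u(x)=\max_{e\ni x}\Le u(e)$ is likewise one-sided (it turns vertex witnesses into edge witnesses, never conversely), so the two ``moreover'' clauses together yield no contradiction: one only learns that the top differing edge value strictly exceeds the top differing vertex value, which is perfectly consistent.

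What is missing is a rigidity statement forcing the competing extensions to agree on the steep region, and this is how the paper closes the argument, proving directly that tight implies edge-tight. Given the tight $u$ and any other extension $v$ of $u|_Y$, Theorem~\ref{EUG} gives $K:=\max\{\Lg v:\Lg u\ne\Lg v\}>\max\{\Lg u:\Lg u\ne\Lg v\}$; both $u$ and $v$ restrict to tight functions on the reduct graph over $Z:=\{\Lg u>K\}=\{\Lg v>K\}$ (the ``steepest part first'' rigidity already used in the proof of Theorem~\ref{PlimG}), hence $u=v$ on $Z$. Consequently every edge with $\Le u>K$ has both endpoints in $Z\cup Y$ and so satisfies $\Le u=\Le v$, giving $\max\{\Le u:\Le u>\Le v\}\le K$, while your lifting observation applied at a vertex realizing $K$ produces an edge with $\Le v=K>\Le u$; thus $v$ is not edge-tighter than $u$. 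Combined with your first step, this yields the proposition. So the obstacle you correctly identify (long edges whose endpoints attain their vertex maxima elsewhere) is not neutralized by the $\tilde E$-propagation as you invoke it; replacing your midpoint step with the top-set rigidity argument above is what completes the proof.
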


\begin{proof}
We first observe that the proof of \THM{EUG} also works for edge-tight extensions. With existence and uniqueness of edge-tight extensions in hand, it is enough to prove that tight implies edge-tight.

Suppose $u : X \to \R^m$ is tight on $G$ and $v : X \to \R^m$ is any other extension of $u |_Y$. \THM{EUG} implies that $u$ is necessarily tighter than $v$. Let
\begin{equation*}
K := \max \{ \Lg v : \Lg u \neq \Lg v \} > \max \{ \Lg u : \Lg u \neq \Lg v \}.
\end{equation*}
Since both $u$ and $v$ are tight on $Z := \{ \Lg v > K\} = \{ \Lg u > K \}$, it follows that $u = v$ on $Z$. This implies that
\begin{equation*}
\max \{ \Le u : \Le v < \Le u \} \leq K \leq \max \{ \Le v : \Le v > \Le u \}.
\end{equation*}
In particular $v$ is not edge-tighter than $u$. Thus tight implies edge-tight.
\end{proof}

%%%%%

\section{Tight functions on $\R^n$}
\label{secTight}

\subsection{Non-uniqueness of AML extensions} We showed in \SEC{secGraph} that discrete infinity harmonic extensions on a graph are not necessarily unique. In higher dimensions the continuum criterion \EQ{defAML} similarly fails to characterize a unique extension.

\begin{example}
For $t \in [0, 1]$, let $u_t : B_1(0) \subseteq \R^2 \to \R^2$ be given by
\begin{equation}
\label{egball}
u_t(z) := t z^2 + (1 - t) z^2 / |z|,
\end{equation}
and observe that
\begin{equation*}
\Lu u_t(x) = 2 + 2 t (|x| - 1).
\end{equation*}
If follows that if $V \subseteq B_1(0)$ and $x \in \partial V$ maximizes $x \mapsto |x|$, then $\sup_V \Lu u_t = \Lu u_t(x)$; we may find distinct $x_1,x_2$ on $\partial V$, arbitrarily near $x$, satisfying $|x_1|=|x_2|$, so $\Lip_{\partial V}(u_t) = \Lip_V(u_t) =  2 + 2 t (|x| - 1)$. Thus each $u_t$ satisfies \EQ{defAML}. Note also that the $u_t$ agree on the boundary of $\partial B_1(0)$ and $t \mapsto \Lu u_t(x)$ is decreasing for all $x \in B_1(0)$. Thus $u_{t_1}$ is tighter than $u_{t_2}$ on $B_1(0)$ when $t_1 > t_2$.
\end{example}

It is interesting to note that the PDE \EQ{pdePr} detects the non-optimality of the $u_t$ when $t < 1$. Indeed, when $t < 1$ the unit vector field
\begin{equation*}
a(z) := i z / |z|,
\end{equation*}
is a principal direction field for $u_t$ on $B_1(0) \setminus \{ 0 \}$. Since the $u_t$-image of a principal flow curve is not a line, we see that $u_t$ does not satisfy \EQ{pdePr}.

\subsection{Limiting Euler-Lagrange equations}

To derive the PDE \EQ{pdePr} as the limit of the Euler-Lagrange equations for \EQ{penergy}, recall (e.g., Chapter 8, Theorem 6 of \cite{MR1625845}) that if $u \in C^2(U, \R^m)$ minimizes a functional of the form
\begin{equation*}
v \mapsto \int_U F(Du) \,dx,
\end{equation*}
where $F \in C^2(\R^{m \times n})$, then $u$ satisfies
\begin{equation*}
- (F_{p_{k,i}}(Du))_{x_i} = 0 \mrbox{in} U,
\end{equation*}
where $F_{p_{k,i}}$ denotes the derivative of $F$ with respect to its $ki$-th input.

If $u \in C^2(U)$ minimizes \EQ{penergy} and has a principal direction field $a \in C^1(U, \R^n)$, then $Du(U)$ is contained in the open set where the norm $A \mapsto |A|$ is smooth. Using \EQ{dnorm}, we conclude that
\begin{equation*}
- (p |Du|^{p-2} u_{x_j} a^j a^k)_{x_k} = 0 \mrbox{in} U.
\end{equation*}
Distributing the derivative with respect to $x_k$ and normalizing, this is equivalent to
\begin{equation*}
- |u_a|^{-2} (u_{aa} \cdot u_a) u_a - \frac{1}{p-2} ((u_a)_a + u_a \div (a)) = 0 \mrbox{in} U.
\end{equation*}
Here we have used the shorthand $u_a := u_{x_i} a^i$ and $u_{aa} := u_{x_i x_j} a^i a^j$ introduced in the preliminaries. Since $u_{aa} \cdot u_a = (u_a)_a \cdot u_a$, this system is equivalent to the pair
\begin{equation*}
\left\{ \begin{aligned}
- (|u_a|^{-2} u_a \otimes u_a) (u_a)_a  & = \frac{1}{p-1} u_a \div(a), \\
- (I - |u_a|^{-2} u_a \otimes u_a) (u_a)_a & = 0.
\end{aligned} \right.
\end{equation*}
Sending $p \to \infty$ yields \EQ{pdePr}.

\subsection{Proofs of main results}

\begin{proof}[{\bf Proof of \THM{Pr}}]
($\Rightarrow$) Suppose $u$ is tight. We first claim that
\begin{equation}
\label{nocurv}
(|u_a|^{-1} u_a)_a = 0,
\end{equation}
which implies that the images of the flow curves of $a$ are straight lines (but does not yet imply that the map is length preserving).

Set $b := |u_a|^{-1} u_a$ and observe that, since $b$ is a unit vector, we have $b \cdot b_a = 0$ and therefore $(b \cdot b_a)_a = b \cdot (b_a)_a + |b_a|^2 = 0$. We now show that if \EQ{nocurv} fails (i.e., the images of the flow curves are not straight lines), then we may modify $u$ (via a smooth perturbation that partially straightens these lines) to produce a tighter function.  Assuming \EQ{nocurv} fails, we may rescale and translate to obtain that $(b_a)_a \cdot b < 0$ on $\bar B_1(0) \subseteq U$.

Define the standard bump function
\begin{equation*}
\varphi(x) := \begin{cases} e^{- (1 - x^2)^{-2}} & \mbox{if } |x| < 1, \\
0 & \mbox{if }|x| \geq 1, \end{cases}
\end{equation*}
and set
\begin{equation*}
v := u + s \varphi b_a,
\end{equation*}
for some $s > 0$ to be determined. Using $u_a \cdot b_a = 0$ and \EQ{dnorm}, compute
\begin{align*}
|Dv|^2 & = |Du|^2 + 2 s u_a \cdot (\varphi b_a)_a + O(s^2) \\
& = |Du|^2 + 2 s \varphi u_a \cdot (b_a)_a + O(s^2).
\end{align*}
Recall that the constant in the $O(s^2)$ depends continuously on $Du$, $D(\varphi b_a)$, and $(\lambda_1(Du^t Du) - \lambda_2(Du^t Du))^{-1}$. Since all three of these are bounded on $\bar B_1(0)$, we may select a constant for the $O(s^2)$ term that works on all of $\bar B_1(0)$. Since $u_a \cdot b_a < 0$ on $\bar B_1(0)$, we see that $v$ is tighter than $u$ for all small $s > 0$.

Thus if \EQ{pdePr} fails, we may assume by \EQ{nocurv} that $u_a \cdot (u_a)_a > 0$ on $\bar B_1(0) \subseteq U$. (If instead $u_a \cdot (u_a)_a < 0$ we can replace $a$ with $-a$.) Since $D |u_a|^2 \neq 0$ in $B_1(0)$ and $u \in C^3(U, \R^m)$, we may assume $|u_a|^2$ has a strict maximum on $\bar B_1(0)$ at some point $x^* \in \partial B_1(0)$. Thus $D|u_a|^2(x^*)$ is a positive multiple of the outward unit normal $a^*$ to $\partial B_1(0)$ at $x^*$. Since $a \cdot D|u_a|^2 = 2 u_a \cdot (u_a)_a > 0$, we conclude that $a(x^*) \cdot a^* > 0$. In particular, $\varphi_a(x) / \varphi(x) \to - \infty$ as $x \to x^*$.

Set
\begin{equation*}
v := u + s \varphi u_a,
\end{equation*}
and use $u_a \cdot (u_a)_a = 0$ and \eqref{dnorm} to compute
\begin{align*}
|D v|^2 & = |Du|^2 + 2 s u_a \cdot (s \varphi u_a)_a + O(s^2) \\
& = |Du|^2 + 2s ( \varphi_a |u_a|^2 + \varphi u_a \cdot (u_a)_a) + O(s^2).
\end{align*}
As before, there is a constant for the $O(s^2)$ term that works for all of $\bar B_1(0)$. Since $\varphi_a(x)/\varphi(x) \to - \infty$ as $x \to x^*$, we know that the linear term in this Taylor expansion is negative in a neighborhood of $x^*$. Since $x^*$ is a strict maximum for $|u_a| = |\Lu u|$ on $\bar B_1(0)$, it follows that $v$ is tighter than $u$ when $s > 0$ is small.

($\Leftarrow$) Suppose $u$ satisfies \EQ{pdePr} in $U$ and that $v$ is tighter than $u$. Let $x \in U$ be a point where
\begin{equation}
\label{xbetter}
\Lu u(x) > \Lu v(x) > \max \{ \Lu v : \Lu v > \Lu u \}.
\end{equation}
Let $\gamma : [0, T] \to \bar U$ be a unit-speed parameterization of the principal flow curve through $x$. It follows from \EQ{pdePr} that $u \circ \gamma$ is affine. We also have
\begin{equation*}
(v \circ \gamma)'(t) \leq |Dv(\gamma(t))| \leq |Du(\gamma(t))| = (u \circ \gamma)'(t),
\end{equation*}
for all $t \in (0, T)$. Since $(v \circ \gamma)'(t_0) < (u \circ \gamma)'(t_0)$, we have
\begin{equation*}
v(\gamma(T)) - v(\gamma(0)) < u(\gamma(T)) - u(\gamma(0)),
\end{equation*}
contradicting the fact that $u = v$ on $\partial U$.
\end{proof}

\begin{lemma}
\label{coequiv}
Suppose $U \subseteq \R^2$ is bounded and open and $u \in C^\infty(U, \R^2)$ is analytic in a neighborhood of $\bar U$. If $u$ is one-to-one and $u''$ does not vanish on $U$, then \eqref{e.uderivatives} and \eqref{e.delta1} are equivalent.
\end{lemma}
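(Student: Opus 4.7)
The plan is to reduce both inequalities to the common pointwise condition $\Re \frac{v'v'''}{(v'')^2} \geq 1$, exploiting that $\log|v'|$ is harmonic as the real part of the locally analytic function $\log v'$.

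First, since $u$ is analytic and one-to-one, $u'$ does not vanish, so $v := u^{-1}$ is analytic on $u(U)$ with $v'(u(z)) = 1/u'(z)$. Differentiating in $z$ gives $v''(u(z)) = -u''(z)/u'(z)^3$, so the hypothesis $u''\neq 0$ transfers to $v''\neq 0$ on $u(U)$. Since $v'$ is analytic and nonvanishing, local branches of $\log v'$ exist, and $\log|v'| = \Re\log v'$ is a globally defined harmonic function on $u(U)$. In particular $\Delta \log|v'| = 0$, so $-\Delta_1 \log|v'| = \Delta_\infty \log|v'|$, and \eqref{e.delta1} becomes $\Delta_\infty \log|v'| \geq 0$.

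The core computation is the complex-analytic identity
\begin{equation*}
\Delta_\infty \Re h = |h'|^2 \,\Re \frac{h''}{(h')^2}
\end{equation*}
valid for any analytic $h$ with $h' \neq 0$. Writing $w = s+it$ and $h' = A+iB$, Cauchy--Riemann yields $(\Re h)_s = A$, $(\Re h)_t = -B$, $(\Re h)_{ss} = -(\Re h)_{tt} = \Re h''$, and $(\Re h)_{st} = -\Im h''$. Expanding,
\begin{equation*}
(\Re h)_i(\Re h)_j (\Re h)_{ij} = (A^2-B^2)\Re h'' + 2AB\,\Im h'' = \Re\!\left((h')^2\,\overline{h''}\right),
\end{equation*}
and since $\overline{(h')^2} = |h'|^4/(h')^2$ the right side equals $|h'|^4\Re(h''/(h')^2)$; dividing by $|D\Re h|^2 = |h'|^2$ gives the identity. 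Applied to $h = \log v'$, using $h' = v''/v'$ and $h'' = v'''/v' - (v''/v')^2$, this produces
\begin{equation*}
-\Delta_1 \log|v'| = \left|\frac{v''}{v'}\right|^2 \left(\Re\frac{v'v'''}{(v'')^2} - 1\right),
\end{equation*}
so, since $|v''/v'|^2 > 0$, \eqref{e.delta1} at $w = u(z)$ is equivalent to $\Re(v'v'''/(v'')^2) \geq 1$.

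A final chain-rule calculation, differentiating $v''(u(z)) = -u''/u'^3$ once more to obtain $v'''(u(z)) = -u'''/u'^4 + 3(u'')^2/u'^5$, gives the algebraic identity
\begin{equation*}
\frac{v'v'''}{(v'')^2} = 3 - \frac{u'u'''}{(u'')^2},
\end{equation*}
so $\Re(v'v'''/(v'')^2) \geq 1$ at $w=u(z)$ exactly when $\Re(u'u'''/(u'')^2) \leq 2$ at $z$, establishing the equivalence of \eqref{e.uderivatives} and \eqref{e.delta1}. The main obstacle is the complex-analytic identity in the third paragraph: once the harmonicity of $\log|v'|$ is noticed and the infinity-Laplacian is repackaged in the clean form $|h'|^2\Re(h''/(h')^2)$, everything else is mechanical chain-rule manipulation.
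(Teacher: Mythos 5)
Your proof is correct and follows essentially the same route as the paper: both reduce \eqref{e.delta1} to $\Re\bigl(v'v'''/(v'')^2\bigr)\geq 1$ via the harmonic function $\Re\log v'$ and the identity $\Delta_\infty\Re h=|h'|^2\Re\bigl(h''/(h')^2\bigr)$, and then translate to \eqref{e.uderivatives} through the inverse-function relation. The only cosmetic difference is that you obtain $v'v'''/(v'')^2=3-u'u'''/(u'')^2$ by directly differentiating $v'(u(z))=1/u'(z)$, whereas the paper normalizes $u(0)=0$, $u'(0)=1$ and compares power series; your version spells out the intermediate steps the paper leaves terse.
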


\begin{proof}
To prove the equivalence of \eqref{e.uderivatives} and \eqref{e.delta1}, write $f=\log v'$ and note that since $f$ is analytic the gradient of $\Re f$ is $\overline f'$ (viewed as a vector in $\R^2$). Note also that $\Re f$ has the same argument as $(f')^{-1}$.  Thus, $\Delta_1 \Re(f) \leq 0$ (or equivalently, $\Delta_\infty \Re(f) \geq 0$) if and only if $\Re [f''/(f')^2] \geq 0$.  Plugging in $f = \log v'$, we have
$f' = v''/v'$ and $f'' = (v'''v' - v''v'')/(v')^2$ so that $$\Re \frac{f''}{(f')^2} = \Re \frac{v'''v' - v''v''}{(v'')^2} \geq 0,$$ which is equivalent to $\Re [v'''v' / (v'')^2] \geq 1$. Now let us convert back to a statement about $u$.  We may suppose that $0 \in U$, $u(0) = 0$ and $u'(0) = 1$, at the more general result can be obtained from this case by composing $u$ with affine functions.  Expanding about zero, we see that if
\begin{equation*}
u(z) = z + a_2 z^2 + a_3 z^3 + \cdots,
\end{equation*}
and
\begin{equation*}
v(z) = z + b_2 z^2 + b_3 z^3 + \cdots,
\end{equation*}
then
\begin{equation*}
z = u \circ v = z + (a_2+b_2)z^2 + (a_3 + b_3 + 2a_2b_2)z^3 + \cdots.
\end{equation*}
We conclude that $v'' = -u''$ and $v''' + u''' + 3(v'')^2 = 0$.   Thus, in this case
\begin{equation*}
\Re \frac{v'''v'}{(v'')^2} = \Re \frac{v'''}{(v'')^2}\geq 1 \quad \mbox{if and only if} \quad \Re  \frac{u'u'''}{(u'')^2} = 3 - \Re \frac{v'''}{(v'')^2} \leq 2,
\end{equation*}
 which is precisely \eqref{e.uderivatives}.
\end{proof}

Before providing a proof of \THM{Co}, we consider a simple family of tight conformal maps.

\begin{example}
Suppose $u(z) = z^m$ or more generally $u(z) = e^{m \log z}$ for some fixed branch of $\log$ where $m$ may be complex.  In this case we have identically
\begin{equation*}
u' u'''/(u'')^2 = (m-2)/(m-1) = 1 - 1/(m-1).
\end{equation*}
Thus $\Re [1 - 1/(m-1)] \leq 2$ provided $\Re (m-1)^{-1} \geq -1$, or equivalently $m \not \in B_{1/2}(1/2)$.  In particular, if $m$ is real, then  \eqref{e.uderivatives} states that $m \not \in (0,1)$.

By taking $u(z) = az^m$ and choosing $m$, $a$, and $z$ we obtain a generic set of values in $\C^3$ for the triple
\begin{equation*}
(u'(z),u''(z),u'''(z)) = (a mz^{m-1}, a m(m-1)z^{m-2}, a m(m-1)(m-2)z^{m-3}).
\end{equation*}
Thus these monomials parameterize the space of inputs to \eqref{e.uderivatives}.
\end{example}

\begin{proof}[{\bf Proof of \THM{Co}}]
($\Rightarrow$)  Recall \EQ{e.delta1}, which states (assuming $u$ is one-to-one) that for each $s>0$ the set $\mathcal S := \{ |Du^{-1}| \geq s \}$ is convex in a local sense: each point $x$ in $uU \cap \partial \mathcal S$ has a neighborhood whose intersection with $\mathcal S$ is convex.  If   \EQ{e.delta1} fails, then there is a neighborhood on which $u$ is one-to-one and $\{ |Du^{-1}| \geq s \}$ is not convex for some $s > 0$.  We show that in this case, we can perturb $u$ to create a function tighter than $u$.  By restricting to a subdomain, we may assume that $D w \neq 0$ in $U$, where $w := |Du|^2$. Let
\begin{equation*}
a := |Dw|^{-1} Dw \quad \mbox{and} \quad b := \left[ \genfrac{}{}{0pt}{}{0 \ -1}{1 \hfill 0} \right] a.
\end{equation*}
Since $\{ |Du^{-1}| \geq s \}$ is not convex, we may restrict to a further subdomain to obtain,
\begin{equation*}
u_b \cdot (u_a)_b < 0 \quad \mbox{in } U.
\end{equation*}
Translating and rescaling, we may assume that $w$ has a strict maximum on the ball $\bar B_1(0) \subseteq U$ at a point $x^* \in \partial B_1(0)$.

Let $\varphi$ be the standard bump function
\begin{equation*}
\varphi(x) := \begin{cases} e^{- (1 - x^2)^{-2}} & \mbox{if } |x| < 1, \\
0 & \mbox{if }|x| \geq 1, \end{cases}
\end{equation*}
and consider the perturbation
\begin{equation*}
v := u + s \varphi u_a,
\end{equation*}
for some $s > 0$ to be determined.

Observe that
\begin{equation*}
|Dv|^2 = |Du|^2 + 2 s \max \{ u_a \cdot (\varphi u_a)_a, u_b \cdot (\varphi u_a)_b \} + O(|D ( s \varphi u_a)|^2),
\end{equation*}
and that
\begin{equation*}
u_a \cdot (\varphi u_a)_a = \varphi_a |u_a|^2 + \varphi u_a \cdot (u_a)_a,
\end{equation*}
and
\begin{equation*}
u_b \cdot (\varphi u_a)_b = \varphi u_b \cdot (u_a)_b.
\end{equation*}
Since $\phi_a(x) / \phi(x) \to - \infty$ as $x \to x^*$, we see that both terms in the max above are negative in a neighborhood of $x^*$. Thus we may choose a small $s > 0$ and obtain a $v$ that is tighter than $u$.

($\Leftarrow$) If $u$ is affine, then tightness is obvious. Thus we may suppose that $u$ is not affine and that \eqref{e.uderivatives} holds. Suppose for contradiction that $v : \bar U \to \R^2$ is tighter than $u$ and define
\begin{equation*}
s := \sup \{ \Lu u : u \neq v \} \geq \sup \{ \Lu u : \Lu u > \Lu v \},
\end{equation*}
and
\begin{equation*}
t := \sup \{ \Lu v : \Lu u < \Lu v \},
\end{equation*}
Note that $t < s$, as $v$ is tighter than $u$.

Since the singularities of \eqref{e.uderivatives} are removable, we must have $u'' \neq 0$ whenever $u' \neq 0$.  (One can easily check that if the power series expansion at a point has a term of degree $1$, no term of degree $2$, and a next term of degree $3$ or higher, then  \eqref{e.uderivatives} must have a non-removable singularity.)   Since $(\log u')' = u'/u''$, it follows that the non-zero level sets of $|u'|$ are regular.  Also, $u = v$ on $\{ \Lu u \geq s \}$.  By modifying the domain $U$, we may assume that $u \neq v$ and $\Lu u < s$ in $U$.

Suppose that $\Gamma \subseteq \partial U$ is a connected component of $\{ x \in \partial U : |u'(x)| = s \}$. Here we are using the fact that $u$ is analytic on a neighborhood of $\bar U$ to make sense of $|u'|$ on $\partial U$. Select $\ep > 0$ such that $s - \ep > t$ and let $V$ be the connected component of $\{ |u'| > s - \ep \}$ whose closure contains $\Gamma$. Since $\log |u'|$ is harmonic, $V$ is simply connected. Let $\gamma := U \cap \partial V$ be the part of the boundary of $V$ that lies in the interior of $U$. Note that $\gamma$ is a smooth curve, as the $s - \ep$ level set of $|u'|$ is regular.

Since $|u'| \geq s - \ep$ on $\bar V$, there is an $r > 0$ such that $u$ is one-to-one on $B(x,r)$ and $u(B(x,r))$ is convex for each $x \in \bar V$.  Making $\ep > 0$ smaller, we may also assume that $\bar V \subseteq \bigcup_{x \in \gamma} B(x,r)$. The differential inequality \eqref{e.delta1} implies $u(\gamma \cap B(x,r))$ is a smooth and concave part of the boundary of $u(V \cap B(x,r))$ for every $x \in \gamma$.

Let $x$ be the midpoint of $\gamma$ and consider the tangent line $l$ to $u(\gamma)$ at $u(x)$. Let $\tilde \Gamma \subseteq V$ be a maximal smooth curve such that $x \in \tilde \Gamma$ and $u(\tilde \Gamma) \subseteq l$. Observe that if $x \in \gamma$, then $l \cap u(B(x,r))$ is either empty or a single line segment. Thus $\tilde \Gamma \cap B(x,r)$ is a subset of the simple curve $(u |_{B(x,r)})^{-1}(l \cap u(B(x,r)))$. Since $\bar V \subseteq \bigcup_{x \in \gamma} B(x,r)$ and $V$ is simply connected, it follows that $\tilde \Gamma$ is simple. Now the concavity of $u(\gamma \cap B(x,r))$ guarantees that the endpoints of $\tilde \Gamma$ lie on $\partial U$.

Let $(y,z) := u(\tilde \Gamma)$ and $w := (u|_W)^{-1}$. Since $w((y,z)) = \tilde \Gamma \subseteq V$, we see that $v \circ w$ is a Lipschitz contraction on $(y,z)$. Since $w(y), w(z) \in \partial U$, we see that $v \circ w$ is the identity on $\{ y, z \}$. It follows that $w \circ v$ is the identity on $(y, z)$. In particular, $u = v$ on $\tilde \Gamma$, contradicting the fact that $u \neq v$ in $U$.
\end{proof}

%%%%%

\subsection{Infinity harmonic fans}
\label{secFans}
In this section, we establish that when $n=2$ smooth solutions of \EQ{pdePr} are what we call \emph{infinity harmonic fans}.  Informally, a such a fan $u$ is constructed by mapping each principal flow curve of the infinity harmonic function $f$ onto a line in $\R^m$ in such a way that motion orthogonal to $Df$ is not amplified too much.

For a concrete example, take any infinity harmonic function $f \in C^2(U, \R)$ that satisfies $\inf_U |Df| > 1$ and consider the map $u : \R^{n + m} \to \R^{1 + m}$ given by
\begin{equation*}
u : (x,y) \mapsto (f(x), y).
\end{equation*}
While $f$ maps its gradient flow lines to $\R$, the map $u$ maps the gradient flow lines of $f$ to different lines.  The following is a more general proposition that in particular implies that the example above is tight.

\begin{proposition}
\label{Fan}
Suppose $U \subseteq \R^n$ is bounded open and $u \in C^\infty(U, \R^m)$ has the form
\begin{equation}
\label{fanDec}
u(x) = f(x) b(x) + c(x),
\end{equation}
where
\begin{enumerate}
\item the function $f \in C^\infty(U, \R)$ is infinity harmonic,
\item the unit vector field $b \in C^\infty(U, \R^m)$ is constant on each flow curve of $f$,
\item the vector field $c \in C^\infty(U, \R^m)$ is constant on each flow curve of $f$,
\item the vector field $a := |Df|^{-1} Df$ is a principle direction field for $u$.
\end{enumerate}
Then $u$ is tight.
\end{proposition}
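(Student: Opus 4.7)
My plan is to verify the PDE \EQ{pdePr} for $u$ and then invoke \THM{Pr}. Since hypothesis (4) already tells us $a = |Df|^{-1}Df$ is a principal direction field for $u$, and $u \in C^\infty(U,\R^m)$, $a \in C^\infty(U,\R^n)$ (as $|Df| > 0$ on $U$ by definition of $a$), the regularity hypotheses of \THM{Pr} are satisfied. So it suffices to show $-(u_a)_a = 0$ in $U$.

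The key observation is that hypotheses (2) and (3) state exactly that $b$ and $c$ are constant along flow curves of $f$, equivalently $b_a \equiv 0$ and $c_a \equiv 0$, since $a$ is (a unit rescaling of) $Df$. Starting from the decomposition $u = fb + c$ and differentiating in the direction $a$, I would compute
\begin{equation*}
u_a = f_a \, b + f\, b_a + c_a = f_a\, b,
\end{equation*}
and then, applying $\partial_a$ once more and using $b_a = 0$ again,
\begin{equation*}
(u_a)_a = (f_a)_a\, b + f_a\, b_a = (f_a)_a\, b.
\end{equation*}
Thus it remains to show $(f_a)_a = 0$.

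For the final step, I would unpack $(f_a)_a$ using $f_a = f_{x_i} a^i$:
\begin{equation*}
(f_a)_a = f_{x_i x_j} a^i a^j + f_{x_i} a^i_{x_j} a^j = f_{aa} + |Df|\, a \cdot a_a,
\end{equation*}
where I used $f_{x_i} = |Df|\, a^i$ in the second term. Since $|a|^2 \equiv 1$, differentiating in the direction $a$ gives $a \cdot a_a = 0$, so the second term vanishes. The first term is exactly $\Delta_\infty f$ (as the calculation following \THM{Pr} shows, since $f$ is scalar with $a = \pm|Df|^{-1}Df$), and it vanishes by hypothesis (1). Therefore $(u_a)_a = 0$ and \THM{Pr} gives tightness.

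There is no real obstacle here beyond making sure the bookkeeping is clean; the proof is essentially a two-line computation once one observes that the hypotheses (2) and (3) translate to $b_a = c_a = 0$, which is what kills all of the ``fanning'' terms and reduces the full system \EQ{pdePr} to the scalar infinity Laplace equation for $f$. The only subtlety worth flagging is that we need $|Df|>0$ for $a$ to be well-defined and smooth, but this is already built into hypothesis (4).
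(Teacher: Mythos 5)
Your proof is correct and matches the paper's argument: both reduce \eqref{pdePr} to the scalar infinity Laplace equation by computing $u_a = f_a\, b$ and $(u_a)_a = (f_a)_a\, b = f_{aa}\, b = 0$, using $b_a = c_a = 0$ and the fact that $a\cdot a_a = 0$ (equivalently $(Df)\,a_a = 0$), then invoking Theorem \ref{Pr}. The extra bookkeeping you include (unpacking $(f_a)_a$ and flagging $|Df|>0$) is just a more explicit write-up of the same two-line computation.
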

\begin{proof}
Using $b_a = 0$, $c_a = 0$, and $(Df) a_a = 0$, compute
\begin{equation*}
u_a = b f_a \mmbox{and} (u_a)_a = b (f_a)_a = b f_{aa} = 0.
\end{equation*}
Here we have again used the shorthand $v_a := v_{x_i} a^i$ and $v_{aa} := v_{x_i x_j} a^i a^j$. Thus \THM{Pr} implies that $u$ is tight.
\end{proof}

We remark that condition (4) in \PROP{Fan} is equivalent to
\begin{equation*}
|Df| = |b \otimes Df + f Db + Dc| > |f Db + Dc| \quad \mbox{in } U.
\end{equation*}
The next proposition shows that when $n=2$ and $U$ is simply connected, all non-conformal smooth tight functions have this form.

\begin{proposition}
Suppose $U \subseteq \R^2$ is bounded and simply connected and $u \in C^\infty(U, \R^m)$ is tight with principal direction field $a \in C^\infty(U, \R^n)$. Then $u$ can be written in the form \EQ{fanDec} described in Proposition \ref{Fan}.
\end{proposition}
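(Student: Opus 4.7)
The plan is to use Theorem \ref{Pr} as a black box and then build $(f,b,c)$ explicitly, with the one substantial analytic step being that the $1$-form $\lambda a$ (with $\lambda := |u_a|$) is closed. I would set
\begin{equation*}
\lambda := |u_a| \in C^\infty(U,\R), \qquad b := u_a/\lambda \in C^\infty(U,\R^m);
\end{equation*}
since $a$ is a principal direction field, $\lambda>0$ everywhere, so these are well defined. Theorem \ref{Pr} gives $(u_a)_a=0$, which immediately yields $\lambda_a=0$ and $b_a=0$. Once I have $f\in C^\infty(U,\R)$ with $Df=\lambda a$, I will set $c:=u-fb$ and verify the four conditions directly.

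Using simple connectivity of $U\subseteq\R^2$, I would choose a smooth angle $\theta$ with $a=(\cos\theta,\sin\theta)$, so that $a^\perp=(-\sin\theta,\cos\theta)$ and the Frenet-type identities
\begin{equation*}
a_a=\theta_a a^\perp, \qquad (a^\perp)_a=-\theta_a a, \qquad a_{a^\perp}=\theta_{a^\perp}a^\perp
\end{equation*}
give $\operatorname{curl}(\lambda a)=\lambda\theta_a-\lambda_{a^\perp}$. The heart of the argument is the identity $\lambda_{a^\perp}=\lambda\theta_a$, which I would extract from the principal-direction hypothesis in the form $u_a\cdot u_{a^\perp}=0$. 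Differentiating this relation along $a$, invoking $(u_a)_a=0$, and using $(u_{a^\perp})_a=u_{aa^\perp}-\theta_a u_a$ yields $u_a\cdot u_{aa^\perp}=\theta_a\lambda^2$. On the other hand, differentiating $\lambda^2=u_a\cdot u_a$ along $a^\perp$ together with $(u_a)_{a^\perp}=u_{aa^\perp}+\theta_{a^\perp}u_{a^\perp}$ and $u_a\cdot u_{a^\perp}=0$ gives $(\lambda^2)_{a^\perp}=2u_a\cdot u_{aa^\perp}=2\theta_a\lambda^2$. Combining the two, $\lambda_{a^\perp}=\lambda\theta_a$, so $\operatorname{curl}(\lambda a)=0$, and simple connectivity provides the desired primitive $f$.

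To finish I would verify (1)--(4). Property (4) is immediate, since $Df/|Df|=a$ is a principal direction field for $u$ by hypothesis. From $f_a=\lambda$ and $f_{a^\perp}=Df\cdot a^\perp=0$, together with $(f_a)_a=\lambda_a=0$ and the expansion $(f_a)_a=f_{aa}+\theta_a f_{a^\perp}$, we get $f_{aa}=0$, so $\Delta_\infty f=|Df|^{-2}f_{x_i}f_{x_j}f_{x_ix_j}=f_{aa}=0$, giving (1). The identity $b_a=0$ is (2), and
\begin{equation*}
c_a=u_a-f_a b-f b_a=\lambda b-\lambda b-0=0
\end{equation*}
gives (3). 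The one step I expect to be nontrivial is the closedness calculation; everything else is bookkeeping, and the calculation succeeds precisely because the principal-direction hypothesis supplies the orthogonality $u_a\cdot u_{a^\perp}=0$ --- an input that was not needed in the proof of Theorem \ref{Pr} itself.
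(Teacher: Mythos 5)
Your proposal is correct, and it reaches the decomposition by a somewhat different route than the paper. The paper also defines $b := |u_a|^{-1}u_a$ and invokes \THM{Pr} to get $(u_a)_a = 0$, but it then builds $f$ and $c$ geometrically: fix a transversal $P$ orthogonal to the principal flow curves, declare $f$ constant on $P$, and propagate $f$ and $c$ along the flow curves (the value of $f$ being proportional to arc length along each curve), finally checking smoothness and $f_{aa}=0$. What that sketch leaves implicit is why the resulting $f$ is constant on \emph{every} orthogonal curve, i.e.\ why $Df$ is parallel to $a$ (condition (4) of Proposition \ref{Fan}) and why the construction is consistent when one changes the base transversal. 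Your argument supplies exactly this point in rigorous form: the identity $\lambda_{a^\perp} = \lambda\,\theta_a$, equivalently $\operatorname{curl}(\lambda a) = 0$, is precisely the statement that the level sets of the propagated $f$ remain orthogonal to $a$, and you derive it from the principality input $u_a\cdot u_{a^\perp}=0$ (valid here since $a^\perp$ is the other eigenvector of the symmetric matrix $Du^tDu$ when $n=2$) — an input the paper's written proof never uses explicitly. Integrating the closed form on the simply connected $U$ then gives $Df=\lambda a$ at once, and your verifications of (1)--(4), including $\lambda_a=0$, $b_a=0$, $f_{aa}=0$, and $c_a=0$, are all correct. In short: the paper's proof is a constructive flow-curve argument with the key compatibility step asserted implicitly; yours replaces it by an exactness computation that makes that step explicit, at the modest cost of introducing the angle function $\theta$ and the Frenet-type identities.
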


\begin{proof}
If $u$ is tight and has a principal direction field $a \in C^\infty(U, \R^n)$, then Theorem \ref{Pr} implies that $u$ satisfies \EQ{pdePr}. In particular, the images of the principal flow lines are straight lines.  This suggests we simply define $b := |u_a|^{-1} u_a$. Now pick an arbitrary point on $x$ on one of the principal flow curves, and consider the path $P$ orthogonal to the principal flow curves that passes through $x$.  Observe that once we define $c(x)$, then \EQ{fanDec} determines $f(x)$ on the entire flow curve passing through $x$. Since $f$ must be constant on $P$, this determines $c$ and $f$ on the union of the flow lines passing through $P$. Picking a new $x$ and repeating this process allows us to construct $c$ and $f$ on all of $U$.

Observe that the $f$ and $c$ constructed as above must necessarily be smooth. Indeed, the value of $f$ as some new point $y$ is determined by the length of the path through the field $a$ to $P$. Since $a$ is smooth, this distance must be a smooth function. Since $c = u - f b$, it too must be smooth.

Finally, observe that $(u_a)_a = 0$ implies $f_{aa} = 0$. Thus $f$ is infinity harmonic.
\end{proof}

\section{Radially symmetric examples on $\R^2$}
\label{secEG}

\subsection{Classifying radially symmetric tight functions}

Let $D \subseteq \C \setminus \{ 0 \}$ be bounded and simply connected and fix $\alpha > 0$.  By \THM{Co} (and the subsequent discussion) the maps $u(z)= z^\alpha$ and $u(z) = (\bar z)^{-\alpha}$ are tight when the exponent is in $\R \setminus (0,1)$.  These maps have some nice radial symmetries: in particular, $\arg u(z) = \alpha \arg z$ for all $z\in D$ (and some branch of $\arg$ defined on $D$) and $|u(z)|$ depends only on $|z|$.  What other tight functions have these symmetries?

To frame this question more precisely, let $\rho : C_u \to A$ be the universal cover of the annulus
\begin{equation*}
A := \{ x \in \R^2 : R_1 < |x| < R_2 \},
\end{equation*}
where $0 < R_1 < R_2 < \infty$. Note that $C_u$ can be parameterized by the set $(R_1, R_2) \times \R$ so that the covering map $\rho : C_u \to A$ is given by
\begin{equation*}
\rho(r,\theta) = (r \cos \theta, r \sin \theta).
\end{equation*}
We fix $\alpha > 0$ and restrict our attention to maps $u : \bar C_u \to \R^2$ that are $\alpha$-periodic in the sense that
\begin{equation*}
u(r, \theta) = u(r, \theta + 2 \pi / \alpha) \quad \mbox{for every } r \in [R_1, R_2] \mbox{ and } \theta \in \R.
\end{equation*}
We can think of such maps $u$ as being defined on the Riemannian manifold $C$ given by $C_u$ modulo the map $(r,\theta) \to (r,\theta+2\pi/\alpha)$. In particular, this allows us to think of $z \to z^\alpha$ as an injective map from $C$ to $\R^2$.

We adopt the same definition of tight as before, even though $\bar C$ is a manifold that is not strictly a subset of $\C$ (unless $\alpha = 1$).  We now seek tight functions $u : \bar C \to \R^2$ of the form \begin{equation} \label{e.radialudef} u(r,\theta) = \rho(\phi(r), \alpha \theta).\end{equation}

Observe that \THM{Pr} and \THM{Co} have straightforward generalizations to functions $u : C \to \R^2$. \THM{Pr} implies that the map $u : C \to \R^2$ given by \EQ{e.radialudef} with
\begin{equation}
\label{prform}
\phi(r) = k r + k_0
\end{equation}
is tight provided that
\begin{equation} \label{prformreq} |k| > |\alpha|(k r + k_0) / r \mbox{ for every } r \in (R_1, R_2).
\end{equation}
Indeed, the condition on the constants $k, k_0, \alpha \in \R$ implies that the radial direction in principal. \THM{Co} implies that the map $u : C \to \R^2$ given by \EQ{e.radialudef} with
\begin{equation}
\label{coform}
\phi(r) = k r^{\pm \alpha}
\end{equation}
is tight if and only if $k = 0$ or the exponent $\pm \alpha$ is not in $(0,1)$.

%Our goal is to glue these two types of solutions together to build new examples of tight functions.
%Of course, we are primarily interested in tight functions $u : U \to \R^2$ where $U \subseteq \R^2$. One can easily produce such examples from maps %on the universal cover $C$ by composing with the inverse of $\rho |_{[R_1, R_2] \times [\theta_1, \theta_2]}$ for some $0 < \theta_2 - \theta_1 < %\infty$.

\begin{proposition}
\label{Glue}
Suppose $u : C \to \R^2$ has the form \EQ{e.radialudef}.  Then $u$ is tight if and only if $\phi$ either has the form of \EQ{prform} (satisfying \EQ{prformreq}) or \EQ{coform} on all of $[R_1,R_2]$, or this is not the case and there is some $\tilde R \in (R_1,R_2)$ such that one of the following is true (here $k_i$ are assumed to be positive):
 \begin{enumerate}
\item $\alpha \geq 1$ and $\phi$ has the form $k_1 r^{- \alpha}$ on $[R_1,\tilde R]$ and the form $k_2 r^{\alpha}$ on $[\tilde R, R_2]$
\item $\alpha \geq 1$ and $\phi$ has the form $k_1 r^{- \alpha}$ on $[R_1,\tilde R]$ and is a decreasing affine function on $[\tilde R, R_2]$ (with matching slope at $\tilde R$).
\item $\alpha \geq 1$ and $\phi$ has the form $k_2 r^{\alpha}$ on $[\tilde R, R_2]$ and is an increasing affine function on $[R_1,\tilde R]$ (with matching slope at $\tilde R$).
\item $\alpha \in (0,1)$ and $\phi$ has the form $k_1 r^{-\alpha}$ on $[R_1, \tilde R]$ and is an increasing affine function on $[\tilde R,R_2]$ (with opposite slope at $\tilde R$).
\item $\alpha \in (0,1)$ and $\phi$ has the form $k_1 r^{-\alpha}$ on $[R_1, \tilde R]$ and is a decreasing affine function on $[\tilde R,R_2]$ (with matching slope at $\tilde R$).
\end{enumerate}
% $\phi$ does not have a strict local maximum at $\tilde R$, and at $\tilde R$ the left and right derivative of $\phi$ agree in magnitude with the %derivatives of the curves $k r^{\pm\alpha}$ passing through $(\tilde R, \phi(\tilde R))$.
\end{proposition}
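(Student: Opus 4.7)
The plan is to analyze $u(r,\theta) = \rho(\phi(r),\alpha\theta)$ pointwise, classify $\phi$ on maximal intervals where a single regime holds using Theorems \ref{Pr} and \ref{Co}, and then enumerate the admissible gluings. First, I would compute in polar coordinates: the differentials of $u$ along the radial unit vector $e_r$ and angular unit vector $e_\theta$ (in the domain) are orthogonal in the image with magnitudes $|\phi'(r)|$ and $|\alpha\phi(r)/r|$ respectively, so
\begin{equation*}
\Lu u(r,\theta) = \max\{|\phi'(r)|,\,|\alpha\phi(r)/r|\}.
\end{equation*}
The radial direction is principal where $|\phi'(r)| > |\alpha\phi(r)/r|$, the angular direction is principal where the reverse strict inequality holds, and equality corresponds to the (anti-)conformal regime handled by Theorem \ref{Co}. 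This partitions $(R_1,R_2)$ into at most countably many maximal subintervals.

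On each open subinterval I would invoke the relevant characterization. If the radial direction is principal, then $a = e_r$ gives $u_a = \phi'(r) e_r^{(2)}(\alpha\theta)$ and a direct computation yields $(u_a)_a = \phi''(r) e_r^{(2)}(\alpha\theta)$; Theorem \ref{Pr} forces $\phi'' \equiv 0$, so $\phi(r) = kr+k_0$ with the principal condition equivalent to \EQ{prformreq}. If the angular direction is principal, a similar computation gives $(u_a)_a = -\alpha^2\phi(r)r^{-2} e_r^{(2)}(\alpha\theta)$, which cannot vanish for nontrivial $\phi$; thus this regime is excluded. In the conformal regime $|\phi'(r)| = |\alpha\phi(r)/r|$ forces $\phi(r) = kr^{\pm\alpha}$, and the monomial example following Theorem \ref{Co} shows $\phi = kr^\alpha$ is tight exactly when $\alpha \geq 1$, while $\phi = kr^{-\alpha}$ (which corresponds to the anti-conformal map $w\mapsto \bar w^{-\alpha}$) is admissible for all $\alpha > 0$.

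Next, I would analyze interfaces. Continuity of $\phi$ is forced by the Lipschitz bound on $u$, and a change of regime at $\tilde R$ occurs precisely where $|\phi'|$ and $|\alpha\phi/r|$ cross. A short case analysis on which form sits on each side, the sign of the slope, and the resulting matching condition enumerates the listed cases: two conformal pieces with opposite exponents glue into case (1) (here the slopes are negatives of each other and $\phi$ has a minimum at $\tilde R$); a conformal piece meeting an affine piece with matching slope, checked so that \EQ{prformreq} holds strictly on the affine side, produces cases (2), (3), (5); and the opposite-slope gluing, where \EQ{prformreq} holds on the affine side only when $\alpha \in (0,1)$, produces case (4). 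The exponent restrictions ($\alpha \geq 1$ in (1)--(3), $\alpha \in (0,1)$ in (4)--(5)) arise naturally: the radial principal inequality reduces in each case to $(r-\tilde R)(1-\alpha)>0$ or its reverse, singling out exactly the ranges listed.

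For the converse, each single-form case is a direct consequence of Theorem \ref{Pr} or Theorem \ref{Co}. For the glued cases, I would argue that any putatively tighter $v$ must first agree with $u$ on the superlevel sets of $\Lu u$; since in each case $\Lu u$ is continuous and has a simple monotonicity pattern (monotone, or with a unique extremum at $\tilde R$), this forces $v = u$ on progressively larger sets and eventually everywhere, ruling out tighter competitors. The main obstacle I anticipate is this last step: tightness is a global condition and I would need to handle carefully the possibility that a competitor trades gains at one radius for losses at another across the interface. I expect this to be tractable by using the proof scheme of Theorem \ref{Pr} (perturbations along principal flow curves) on each subinterval together with the observation that at the interface $\Lu u$ attains a value that cannot be decreased without destroying the tight structure of at least one of the adjoining pieces.
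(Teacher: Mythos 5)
Your pointwise formula $\Lu u(r,\theta)=\max\{|\phi'(r)|,\alpha\phi(r)/r\}$ and the identification of the local building blocks (radial-principal forces $\phi$ affine, angular-principal is impossible, conformal forces $\phi=kr^{\pm\alpha}$) are consistent with the paper, but there are two genuine gaps. First, in the sufficiency direction you never reduce arbitrary competitors $v\in C(\bar C,\R^2)$ to radially symmetric ones. The paper does exactly this with a symmetrization lemma (Lemma \ref{radAvg}): rotating and averaging a tighter competitor yields a tighter competitor of the form \eqref{e.radialudef}, after which the whole problem is a one-variable comparison for $\phi$. Your substitute --- that ``a tighter $v$ must agree with $u$ on the superlevel sets of $\Lu u$, forcing $v=u$ on progressively larger sets'' --- is an unproven rigidity claim: tighter only means $\sup\{\Lu u:\Lu v<\Lu u\}>\sup\{\Lu v:\Lu v>\Lu u\}$, which forces agreement nowhere, and the paper has no continuum uniqueness statement of this kind to invoke. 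Nor can you fall back on the converse halves of Theorems \ref{Pr} and \ref{Co} in the glued cases: the flow-curve argument of Theorem \ref{Pr} needs $u\circ\gamma$ affine along a principal curve running from boundary to boundary, whereas here the radial curves cross the interface into the conformal region, and Theorem \ref{Co} requires analyticity in a neighborhood of $\bar U$, which fails at $r=\tilde R$.

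Second, the necessity direction is incomplete. You assume $\phi$ is regular enough that $(R_1,R_2)$ decomposes into intervals on which a single regime holds and the smooth classification theorems apply; but the proposition concerns arbitrary Lipschitz $\phi$, for which the equality set $\{|\phi'|=\alpha\phi/r\}$ need not be a union of intervals and $\phi$ need not be $C^1$, let alone $C^3$ or analytic. Moreover, purely local regime arguments cannot exclude configurations that are locally of the allowed forms but globally absent from the list: two affine pieces meeting at a kink, a ``hump'' such as $k_1r^{\alpha}$ glued to $k_2r^{-\alpha}$ with a maximum of $\phi$ at $\tilde R$, or several interfaces. The paper excludes these non-locally by comparing $\phi$ directly with the explicit interpolation $\bar\phi$: if $\phi>\bar\phi$ then replacing it by $\bar\phi$ is tighter because the maximal contribution to $\Lu u$ occurs at the endpoints; if $\phi$ dips below a chord or below a convex curve $kr^{\pm\alpha}$ through two of its graph points, an a.e.-derivative argument produces a point where $\phi$ is strictly steeper, reducing to the previous case. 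Some comparison argument of this kind against the candidate $\bar\phi$ is indispensable and is missing from your outline.
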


The five possibilities are illustrated in Figure \ref{joints}.  The lighter curves in the background illustrate the functions $k r^{\pm \alpha}$ for a range of $k$ values.  Note that if $\phi$ is one of the functions described in Proposition \ref{Glue}, then on each the two intervals $[R_1, \tilde R]$ and $[\tilde R, R_2]$ it either traces one of the curves $k r^{\pm \alpha}$ or it traces a straight line that is strictly steeper than each such curve it intersects (at the point of intersection).  Note that $\alpha \geq 1$ for the first three figures and $\alpha \in (0, 1)$ for the fourth and fifth (hence the concavity of the increasing background curves).

\begin{figure}[h]
\caption{The five possible interfaces in our radially symmetric examples.}
\label{joints}
\bigskip
\includegraphics[width=4in]{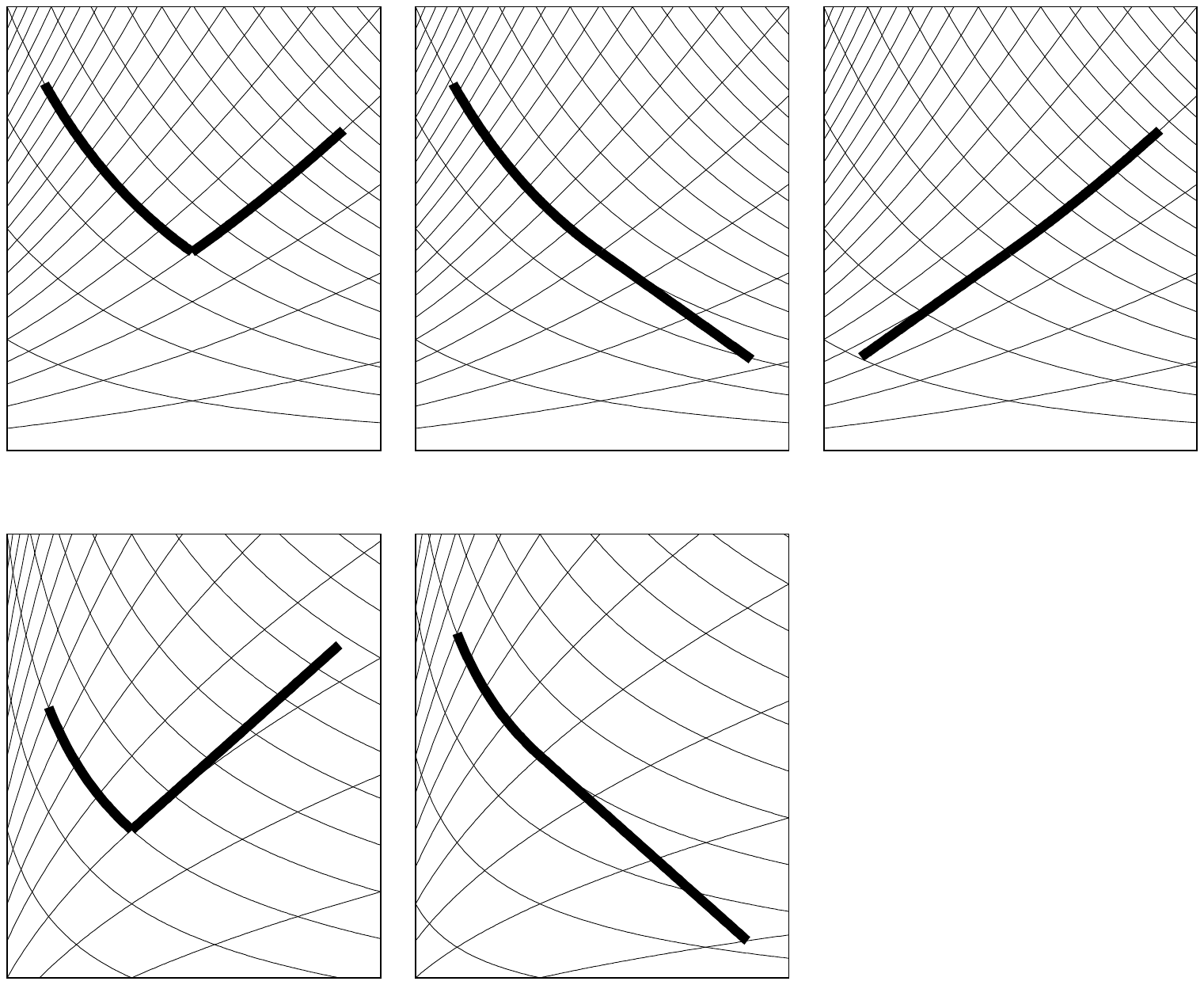}
\includegraphics[width=2.7in]{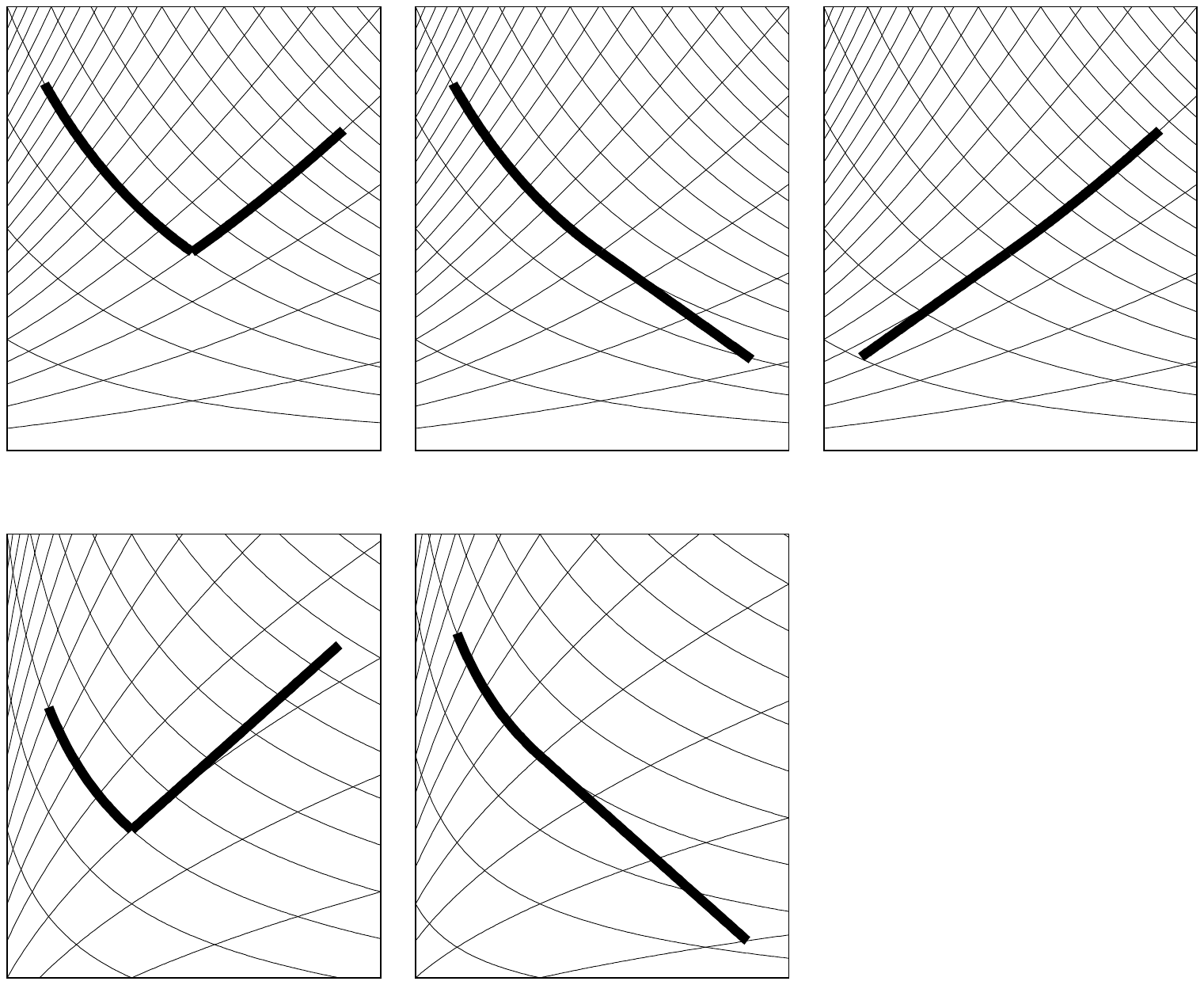}
\end{figure}

\begin{corollary}
If $g : \partial C \to \R^2$ has the form
\begin{equation}
\label{gform}
g(r, \theta) = \left\{ \begin{array}{ll}
\rho(M_1, \alpha \theta) & \mbox{if } r = R_1, \\
\rho(M_2, \alpha \theta) & \mbox{if } r = R_2, \\
\end{array} \right.
\end{equation}
for some $\alpha \in \R$, then $g$ has a unique tight extension $u : C \to \R^2$ of the form \EQ{e.radialudef}.
\end{corollary}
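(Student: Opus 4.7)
The plan is to reduce the corollary to a one-dimensional existence and uniqueness question for the radial profile $\phi$, and then invoke \PROP{Glue} to enumerate the admissible candidates.

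First, write $u(r,\theta)=\rho(\phi(r),\alpha\theta)$ and compare to the hypothesis for $g$; this forces the boundary values $\phi(R_i)=M_i$. I will assume $M_1,M_2>0$ (the degenerate cases $M_i=0$, corresponding to a collapsed boundary circle, are handled by separate and easier direct arguments). The corollary thus reduces to showing that there is a unique $\phi:[R_1,R_2]\to(0,\infty)$ with $\phi(R_i)=M_i$ taking one of the forms enumerated in \PROP{Glue}.

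With $M_1$ fixed, I will arrange the set of admissible profiles into a single continuous one-parameter family $\{\phi_s\}_{s\in I}$ on an interval $I\subset\R$, and show that the evaluation
\begin{equation*}
\Phi:I\to(0,\infty),\quad \Phi(s):=\phi_s(R_2),
\end{equation*}
is a continuous strictly monotone bijection. When $\alpha\geq 1$, as $s$ decreases the family $\phi_s$ sweeps (in order) through pure increasing affine \EQ{prform}, configuration (3), the pure increasing power $M_1(r/R_1)^\alpha$, configuration (1), the pure decreasing power $M_1(R_1/r)^\alpha$, configuration (2), and finally pure decreasing affine. Adjacent regimes patch together because each interface configuration degenerates, as $\tilde R\to R_1$ or $\tilde R\to R_2$, into either the adjacent pure form or a pure affine tangent to a power curve at one endpoint (the latter still satisfies \EQ{prformreq} strictly on the open interval $(R_1,R_2)$, since $\phi(r)/r$ is strictly monotone there). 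When $\alpha\in(0,1)$ the pure increasing power regime disappears and configurations (4) and (5) replace configurations (1), (2), (3); otherwise the argument is identical. Within each regime monotonicity of $\Phi$ is a one-line derivative check --- for example in configuration (1) one obtains $\Phi=M_1(R_1R_2/\tilde R^2)^\alpha$, strictly decreasing in $\tilde R$. At the extremes, pure increasing affine with unbounded slope gives $\Phi\to+\infty$, while pure decreasing affine (or configuration (2), whichever is available in the given geometry) drives $\Phi\to 0^+$.

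The main technical obstacle is the case-by-case bookkeeping across the (up to seven) regimes: verifying that the hypotheses of \PROP{Glue} are actually met (the strict steepness inequality \EQ{prformreq} for pure affine pieces, the slope-matching at interfaces, positivity of $\phi$, and the power-exponent constraint that excludes $kr^\alpha$ when $\alpha\in(0,1)$), that each configuration degenerates correctly at the boundary of its parameter range, and that $\Phi$ is monotone \emph{globally} (not just piecewise) with matching values across transitions. Once the continuous monotone bijection $\Phi$ is established, $\phi=\phi_{\Phi^{-1}(M_2)}$ yields simultaneously the existence and uniqueness of the tight extension of the prescribed form.
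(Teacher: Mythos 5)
Your plan is correct and proves the right reduced statement (existence and uniqueness of an interpolating profile $\phi$ among the forms allowed by \PROP{Glue}, with $\phi(R_i)=M_i$), but it gets there by a different device than the paper. The paper's own proof is a static classification: it changes coordinates via $(R,M)\mapsto(R^\alpha/M,\,R^{-\alpha}/M)$, so that the background curves $kr^{\pm\alpha}$ become coordinate lines; since $R_2>R_1$ excludes one sign pattern, the displacement from $(R_1,M_1)$ to $(R_2,M_2)$ lies in one of three quadrants, each quadrant corresponds to exactly one of the configurations (or to the pure forms \EQ{prform}/\EQ{coform} on the quadrant boundaries and in the straight-line quadrant when $\alpha<1$), and within a configuration the parameters are determined — uniqueness and existence come for free from the quadrant picture, with no continuity or monotonicity argument needed. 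Your route instead fixes $M_1$, organizes all admissible profiles into a one-parameter sweep, and applies the intermediate value theorem together with strict monotonicity of the shooting map $\Phi(s)=\phi_s(R_2)$; this is a legitimate alternative, and your sample computations (e.g.\ $\Phi=M_1(R_1R_2/\tilde R^2)^\alpha$ in configuration (1), the tangent-affine limits of the glued configurations, the observation that either the pure decreasing affine regime or configuration (2) alone carries $\Phi\to 0^+$ depending on whether $\alpha(R_2-R_1)<R_1$) are exactly the checks required and they do go through, including the matching of values at regime transitions. The trade-off is that your argument needs the case-by-case verification you flag (regime availability depending on $\alpha$, $R_1$, $R_2$, e.g.\ configuration (3) alone covering $(M_1(R_2/R_1)^\alpha,\infty)$ when $R_1\le R_2(\alpha-1)/\alpha$, and the degenerate tangency when $\alpha=1$), whereas the paper's coordinate change absorbs that bookkeeping into the quadrant classification; conversely, your monotone-sweep formulation makes the uniqueness mechanism completely explicit, which the paper leaves to ``straightforward inspection.''
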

\begin{proof}
A straightforward inspection shows that any pair of points $(R_1,M_1)$ and $(R_2,M_2)$ in $(0,\infty)^2$, with $R_1 \not = R_2$, can be joined by a unique function $\phi$ that either has one of the four types shown or is of the type \EQ{prform} (satisfying \EQ{prformreq}) or \EQ{coform} on all of $[R_1,R_2]$.  This can be seen by viewing the background curves $k r^{\pm \alpha}$ as the lines of a new coordinate system: that is, we may change coordinates via $(R,M) \to (R^\alpha/M, R^{-\alpha}/M)$.  In the case $\alpha \geq 1$, the difference between $(R_2,M_2)$ and $(R_1,M_1)$ (in these new coordinates) can lie in three possible quadrants, and the three figures represent these three cases.  In the case $\alpha < 1$, the two figures shown represent two possible quadrants; the third quadrant figure is not shown, since one always has a straight line in that case.  The corollary then follows from Proposition \ref{Glue}.
\end{proof}

\begin{lemma}
\label{radAvg}
A Lipschitz function $u \in C(C , \R^2)$ of the form \EQ{e.radialudef} is tight if and only if it there is no tighter $v$ of the form \EQ{e.radialudef}.
\end{lemma}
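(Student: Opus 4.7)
The forward direction is immediate, so my plan focuses on the converse. I will argue by contradiction: suppose some Lipschitz $v \in C(\bar C,\R^2)$ with $v|_{\partial C}=u|_{\partial C}$ is tighter than $u$ while no function of the form \eqref{e.radialudef} is; I will construct such a function by equivariant averaging followed by angular straightening, contradicting the supposition. First I will pick $M$ with $\sup\{\Lu v : \Lu v > \Lu u\} < M < \sup\{\Lu u : \Lu v < \Lu u\}$, which immediately yields the pointwise structural bound $\Lu v(x) \le \max(\Lu u(x), M)$ throughout $C$. Writing $T = 2\pi/\alpha$ and letting $R_\beta$, $S_{\alpha\beta}$ denote the rotations by $\beta$ and $\alpha\beta$ on the domain and target respectively, I will set
\[\tilde v(r,\theta) := \frac{1}{T} \int_0^T S_{-\alpha\beta}\, v(R_\beta(r,\theta))\, d\beta,\]
which is Lipschitz, agrees with $u$ on $\partial C$ (since $u \circ R_\beta = S_{\alpha\beta} \circ u$), and satisfies the equivariance $\tilde v \circ R_\beta = S_{\alpha\beta} \circ \tilde v$. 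By this equivariance $\psi(r) := |\tilde v(r,0)|$ coincides with $|\tilde v(r,\theta)|$ for every $\theta$, and the candidate $v'(r,\theta) := \rho(\psi(r), \alpha\theta)$ is then Lipschitz, of the form \eqref{e.radialudef}, and carries the correct boundary values $\psi(R_i) = M_i$.

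The hard part will be showing that $\tilde v$ is tighter than $u$. Using that $R_\beta$ and $S_{-\alpha\beta}$ are isometries, a routine Lipschitz-quotient estimate gives $\Lu \tilde v(x) \le T^{-1} \int_0^T \Lu v(R_\beta x)\, d\beta$ after sending the ball radius to zero. Since $\Lu u$ is rotation-invariant, the structural bound transfers to $\Lu \tilde v(x) \le \max(\Lu u(x), M)$, so the set $\{\Lu \tilde v > \Lu u\}$ is contained in $\{\Lu u < M\} \cap \{\Lu \tilde v \le M\}$ and therefore $\sup\{\Lu \tilde v : \Lu \tilde v > \Lu u\} \le M$. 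To bound the other sup from below, I will pick $x^*$ with $\Lu u(x^*) > M$ and $\Lu v(x^*) < \Lu u(x^*)$, and use upper semicontinuity of $\Lu v$ (an immediate consequence of its definition as an infimum over shrinking balls) to deduce $\Lu v < \Lu u(x^*)$ on a neighborhood of $x^*$, hence on a positive-measure arc of its rotation orbit; averaging then produces $\Lu \tilde v(x^*) < \Lu u(x^*)$, so $\sup\{\Lu u : \Lu \tilde v < \Lu u\} \ge \Lu u(x^*) > M$. Together these establish that $\tilde v$ is tighter than $u$.

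The last step will compare $v'$ to $\tilde v$. In complex notation, equivariance forces $\tilde v(r,\theta) = \psi(r) \exp(i(\alpha\theta + \gamma(r)))$ on $\{\psi > 0\}$ for some function $\gamma$, and a direct computation in the orthonormal domain frame $(\partial_r, r^{-1}\partial_\theta)$ yields
\[(D\tilde v)^\top D\tilde v = \begin{pmatrix} (\psi')^2 + (\psi\gamma')^2 & \alpha \psi^2 \gamma'/r \\ \alpha \psi^2 \gamma'/r & (\alpha\psi/r)^2 \end{pmatrix}, \qquad (Dv')^\top Dv' = \mathrm{diag}\bigl( (\psi')^2, (\alpha\psi/r)^2 \bigr).\]
Because the largest eigenvalue of a symmetric matrix dominates each of its diagonal entries, $|D\tilde v|^2 \ge \max((\psi')^2 + (\psi\gamma')^2, (\alpha\psi/r)^2) \ge |Dv'|^2$, and so $\Lu v' \le \Lu \tilde v$ pointwise. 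This pointwise inequality preserves the tighter relation (it can only enlarge the first sup and shrink the second), so $v'$ is tighter than $u$, is of the form \eqref{e.radialudef}, and contradicts the hypothesis, finishing the proof.
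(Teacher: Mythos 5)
Your proof is correct and follows essentially the same route as the paper: average the rotated copies $S_{-\alpha\beta}\,v\circ R_\beta$ to get an equivariant competitor that is still tighter (the paper phrases this convexity step as Jensen's inequality on circles, you phrase it via the pointwise bound $\Lu v\le\max(\Lu u,M)$ plus upper semicontinuity), and then remove the residual angular twist $\gamma(r)$, noting this only decreases the local Lipschitz constant. Your Gram-matrix computation just makes explicit the step the paper dismisses as ``clearly $\Lu\tilde w\le \Lu w$ pointwise,'' so no substantive difference.
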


\begin{proof}
Suppose $v \in C(C, \R^2)$ is tighter than $u$, i.e.,
$$M_1:=\sup \{ \Lu u : \Lu v < \Lu u \} > \sup \{ \Lu v : \Lu v > \Lu u \}=: M_2.$$
We define a rotated version of $v$ by $v_\beta(r,\theta) = e^{-i\alpha\beta} v(r, \theta + \beta)$ where ($\theta+ \beta$ is computed modulo $2\pi/\alpha$).  Note that if $v$ has the form  \EQ{e.radialudef} then $v_\beta = v$.  Otherwise, we may consider the symmetrized function $w := \int_0^{2\pi/\alpha} v_\beta d\beta$.  We claim that if $v$ is tighter than $u$ then $w$ is also tighter than $u$.  To see this note first that on any circle $(r, \cdot)$ we have:
\begin{enumerate}
\item $\Lu u$ constant,
\item $\Lu v$ possibly non-constant, but never larger than $\Lu u$ if $\max \{\Lu u, \Lu v \} \geq M_2$,
\item $\Lu w \leq \Lu u$ if $\max \{\Lu u, \Lu w \} \geq M_2$ (by Jensen's inequality).
\end{enumerate}
This implies \begin{equation} \label{vsup1} \sup \{ \Lu w : \Lu w > \Lu u \} \leq M_2.\end{equation}  Similarly, on some circle $(r, \cdot)$ we must have
\begin{enumerate}
\item $\Lu u$ constant with $\Lu u > M_2$,
\item $\Lu v$ possibly non-constant with $\Lu v \leq \Lu u$ on the whole circle and $\Lu v < \Lu u$ on a positive measure subset,
\item $\Lu w < \Lu u$ (by Jensen's inequality).
\end{enumerate}
This implies \begin{equation} \label{vsup2} \sup \{ \Lu u : \Lu u > \Lu w \} > M_2.\end{equation}  Now \EQ{vsup1} and \EQ{vsup2} together imply that $w$ is tighter than $u$.  Now, it is not necessarily the case that $w$ has the form \EQ{e.radialudef}.  The radial symmetries show only that it can be written as
$$w(r,\theta) = \rho(\phi(r), \alpha \theta + \alpha_0(r)),$$
where $\alpha_0$ is some function of $r$.  Let $\tilde w$ be the function obtained from $w$ by replacing this $\alpha_0$ with $0$.  Clearly, $\Lu \tilde w \leq \Lu w$ pointwise, and hence $\tilde w$ is also tighter than $u$.
\end{proof}

%%%%%

\begin{proof}[{\bf Proof of \PROP{Glue}}]
By \LEM{radAvg}, it is enough to consider functions of the form \EQ{e.radialudef}, and to show that \begin{enumerate}
\item If $\phi$ is not of one of the types in the proposition statement, then one can modify $\phi$ (keeping same boundary data) in a way that makes \EQ{e.radialudef} tighter.
\item If $\phi$ is of one of the types in the proposition statement, then one cannot do this.
\end{enumerate}

To establish (1), suppose we are given boundary conditions $(R_1,M_1)$ and $(R_2,M_2)$, let $\bar \phi$ be the interpolation of Proposition \ref{Glue}.  Simple inspection shows that if $\phi$ is strictly larger than $\bar \phi$ on $(R_1, R_2)$ then the extension \EQ{e.radialudef} becomes tighter if $\phi$ is replaced by $\bar \phi$ (note that the maximal contribution to $\Lu u$ comes at one of the endpoints of the interval).  This argument in fact shows that in order for $u$ to be tight, if $\phi$ hits the boundary points $(R_1,M_1)$ and $(R_2,M_2)$ then it must be equal to or less than $\bar \phi$ on $(R_1, R_2)$ (if it is larger on some open interval, we may replace $R_1$ and $R_2$ with the endpoints of that interval and apply the above).

Next, if it happens that $\bar \phi$ is of the type \EQ{prform} (satisfying \EQ{prformreq}) then if $\phi$ is any function other than $\bar \phi$ then $u$ becomes tighter when $\phi$ is replaced by $\bar \phi$ (simply because the Lipschitz norm of a one dimensional function on a finite interval --- with given boundary data -- is minimized by a straight line).  This argument shows that if the graph of $\phi$ hits two points on such a $\bar \phi$, and $\phi$ is not affine between those points, then $u$ is not tight.

Now we claim that if the endpoints of the graph of $\phi$ lie on a convex $kr^{\pm \alpha}$, and the $\phi$ goes below it, then $\phi$ is not tight.  Considering first the $r^\alpha$ case, the function $\phi(r)/r^{\alpha}$ (which is Lipschitz, hence a.e. differentiable) has a positive derivative at some point $r'$.  This means that $\phi'(r)$ is steeper than than $kr^{\alpha}$ curve through $(r, \phi(r))$, and the previous argument implies that if $\phi$ is tight then $\phi$ must be equal to an affine function $g$ in a neighborhood of such a point.  Taking $r''$ to be the smallest point at which $\phi(r)=g(r)$, we find that for some point $r'''$ just smaller than $r''$ the interval $(r''',r')$ is one on which $\phi$ is not affine, even though the corresponding $\bar \phi$ is, and we have reduced to the previous case.  The $r^{-\alpha}$ case is similar.

Now we know that if $\phi$ is tight and strictly below $\bar \phi$ then $\phi/\bar\phi$ cannot obtain a minimum anywhere except $\tilde R$, and that in this case $\phi$ must be affine on both $[R_1, \tilde R]$ and $[\tilde R, R_2]$.  It is easy to see by inspection that in this case $\bar \phi$ is tighter than $\phi$.  This completes the proof of (1).

To establish (2), we must show that the $\bar \phi$ are in fact tight.  If $\phi$ yielded a tighter function, then there would be some interval such that $\phi$ and $\bar \phi$ were equal on the endpoints, not equal inside, and had the restriction of $\phi$ to the interval tighter than $\bar \phi$ on the interval.  This cannot happen if $\phi > \bar \phi$, since in this case the corresponding $\bar u$ would have a strictly higher Lipschitz constant in a neighborhood of the boundary, where the maximum is obtained.  If $\phi < \bar \phi$ then the
derivative of $\phi$ (which again exists a.e.) would have to be strictly less than that of $\phi$ at points arbitrarily near $R_1$ (and greater at points arbitrarily near $R_2$); thus, at points arbitrarily close to the endpoint where $\bar \phi$ is steepest, we have $\phi$ even steeper.  We conclude that $\phi$ cannot be tighter than $\bar \phi$.
\end{proof}

\section{Questions}
\label{secQ}

\begin{question}
Does every Lipschitz function $g : \partial U \to \R^m$ on the boundary of a bounded open set $U \subseteq \R^n$ admit a (unique) tight extension?

One approach to proving existence and uniqueness in the scalar case is to establish good estimates for discrete infinity harmonic functions on lattice models (see, e.g., Lemma 3.9 in \cite{armstrong-2009}). It may be possible to use tight functions on graphs to do this in the vector-valued case. However, one must be careful to avoid the instability of uniqueness described above.
\end{question}

\begin{question}
By \PROP{Fan}, we know that fans of smooth infinity harmonic functions are tight. What happens if we fan out a non-smooth infinity harmonic functions like the Aronsson function $(x,y) \mapsto x^{4/3} - y^{4/3}$?
\end{question}

\begin{question}
Suppose $U \subseteq \R^2$ and $u \in C^2(U, \R^2)$ is tight. Suppose, moreover, that the interface between the regions where $u$ is conformal and $u$ has a principal direction is a smooth curve. What can be said about $u$ along that interface?
\end{question}

\begin{question}
What kinds of surfaces can be images of tight functions in the $\R^2 \to \R^m$ case?
\end{question}

\begin{question}
For any differentiable map $u$, we may denote by $S_k$ the set of $x$ for which the eigenspace of the largest eigenvalue of $Du(x)^tDu(x)$ is $k$ dimensional. Informally, $S_k$ is the set of locations where there are $k$ principle directions.  What can one say in general about the behavior of $u$ on $S_k$?  Can one construct any non-trivial examples of tight functions for which each of the $S_k$ has non-empty interior?
\end{question}

\bibliographystyle{amsplain}
\bibliography{2damle}

\def\cprime{$'$}
\providecommand{\bysame}{\leavevmode\hbox to3em{\hrulefill}\thinspace}
\providecommand{\MR}{\relax\ifhmode\unskip\space\fi MR }
% \MRhref is called by the amsart/book/proc definition of \MR.
\providecommand{\MRhref}[2]{%
  \href{http://www.ams.org/mathscinet-getitem?mr=#1}{#2}
}
\providecommand{\href}[2]{#2}
\begin{thebibliography}{10}

\bibitem{armstrong-2009}
Scott~N. Armstrong and Charles~K. Smart, \emph{A finite difference approach to
  the infinity laplace equation and tug-of-war games}, 2009.

\bibitem{MR0217665}
Gunnar Aronsson, \emph{Extension of functions satisfying {L}ipschitz
  conditions}, Ark. Mat. \textbf{6} (1967), 551--561 (1967). \MR{MR0217665 (36
  \#754)}

\bibitem{BL00}
Yoav Benyamini and Joram Lindenstrauss, \emph{Geometric nonlinear functional
  analysis. {V}ol. 1}, American Mathematical Society Colloquium Publications,
  vol.~48, American Mathematical Society, Providence, RI, 2000. \MR{MR1727673
  (2001b:46001)}

\bibitem{MR2341302}
Thierry Champion and Luigi De~Pascale, \emph{Principles of comparison with
  distance functions for absolute minimizers}, J. Convex Anal. \textbf{14}
  (2007), no.~3, 515--541. \MR{MR2341302 (2008j:49073)}

\bibitem{MR1861094}
M.~G. Crandall, L.~C. Evans, and R.~F. Gariepy, \emph{Optimal {L}ipschitz
  extensions and the infinity {L}aplacian}, Calc. Var. Partial Differential
  Equations \textbf{13} (2001), no.~2, 123--139. \MR{MR1861094 (2002h:49048)}

\bibitem{MR690039}
Michael~G. Crandall and Pierre-Louis Lions, \emph{Viscosity solutions of
  {H}amilton-{J}acobi equations}, Trans. Amer. Math. Soc. \textbf{277} (1983),
  no.~1, 1--42. \MR{MR690039 (85g:35029)}

\bibitem{MR1625845}
Lawrence~C. Evans, \emph{Partial differential equations}, Graduate Studies in
  Mathematics, vol.~19, American Mathematical Society, Providence, RI, 1998.
  \MR{MR1625845 (99e:35001)}

\bibitem{MR1218686}
Robert Jensen, \emph{Uniqueness of {L}ipschitz extensions: minimizing the sup
  norm of the gradient}, Arch. Rational Mech. Anal. \textbf{123} (1993), no.~1,
  51--74. \MR{MR1218686 (94g:35063)}

\bibitem{MR1884349}
Petri Juutinen, \emph{Absolutely minimizing {L}ipschitz extensions on a metric
  space}, Ann. Acad. Sci. Fenn. Math. \textbf{27} (2002), no.~1, 57--67.
  \MR{MR1884349 (2002m:54020)}

\bibitem{MR2242966}
Petri Juutinen and Nageswari Shanmugalingam, \emph{Equivalence of {AMLE},
  strong {AMLE}, and comparison with cones in metric measure spaces}, Math.
  Nachr. \textbf{279} (2006), no.~9-10, 1083--1098. \MR{MR2242966
  (2008e:31009)}

\bibitem{kirszbraun}
M.D. Kirszbraun, \emph{{\"U}ber die {Z}usammenziehenden und {L}ipschitzchen
  {T}ransformationen.}, Fund. Math. \textbf{22} (1934), 77--108.

\bibitem{MR1685133}
Andrew~J. Lazarus, Daniel~E. Loeb, James~G. Propp, Walter~R. Stromquist, and
  Daniel~H. Ullman, \emph{Combinatorial games under auction play}, Games
  Econom. Behav. \textbf{27} (1999), no.~2, 229--264. \MR{MR1685133
  (2001f:91023)}

\bibitem{MR2129708}
James~R. Lee and Assaf Naor, \emph{Extending {L}ipschitz functions via random
  metric partitions}, Invent. Math. \textbf{160} (2005), no.~1, 59--95.
  \MR{MR2129708 (2006c:54013)}

\bibitem{ou-2008}
Ye~lin Ou, Tiffany Troutman, and Frederick Wilhelm, \emph{Infinity-harmonic
  maps and morphisms}, 2008.

\bibitem{MR2239346}
Assaf Naor, Yuval Peres, Oded Schramm, and Scott Sheffield, \emph{Markov chains
  in smooth {B}anach spaces and {G}romov-hyperbolic metric spaces}, Duke Math.
  J. \textbf{134} (2006), no.~1, 165--197. \MR{MR2239346 (2007k:46017)}

\bibitem{naorsheffield}
Assaf Naor and Scott Sheffield, \emph{Absolutely minimal {L}ipschitz extensions
  of tree-valued mappings}, arXiv:1005.2535, 2010.

\bibitem{MR2449057}
Yuval Peres, Oded Schramm, Scott Sheffield, and David~B. Wilson,
  \emph{Tug-of-war and the infinity {L}aplacian}, J. Amer. Math. Soc.
  \textbf{22} (2009), no.~1, 167--210. \MR{MR2449057 (2009h:91004)}

\bibitem{MR2185662}
Ovidiu Savin, \emph{{$C^1$} regularity for infinity harmonic functions in two
  dimensions}, Arch. Ration. Mech. Anal. \textbf{176} (2005), no.~3, 351--361.
  \MR{MR2185662 (2006i:35108)}

\bibitem{MR2539666}
Ze-Ping Wang and Ye-Lin Ou, \emph{Classifications of some special
  infinity-harmonic maps}, Balkan J. Geom. Appl. \textbf{14} (2009), no.~1,
  120--131. \MR{MR2539666}

\bibitem{MR0461107}
J.~H. Wells and L.~R. Williams, \emph{Embeddings and extensions in analysis},
  Springer-Verlag, New York, 1975, Ergebnisse der Mathematik und ihrer
  Grenzgebiete, Band 84. \MR{MR0461107 (57 \#1092)}

\bibitem{MR2255237}
Yifeng Yu, \emph{A remark on {$C^2$} infinity-harmonic functions}, Electron. J.
  Differential Equations (2006), No. 122, 4. \MR{MR2255237 (2007e:35112)}

\end{thebibliography}

\end{document}